\numberwithin{equation}{section}
\newtheorem{thm}{Theorem}[section]
\newtheorem{prop}[thm]{Proposition}
\newtheorem{lemma}[thm]{Lemma}
\newtheorem{defn}[thm]{Definition}
\newcommand{\ot}{\Omega_T }
\newcommand{\mdiv}{\textup{div}}
\newcommand{\wot}{W^{1,2}\left(\Omega\right)}
\newcommand{\tuj}{\tilde{u}_j}
\newcommand{\buj}{\bar{u}_j}
\newcommand{\brj}{\bar{\rho}_j}
\begin{document}
	
	\title[]{Exponential Crystal Relaxation Model With P-Laplacian}
	\author{Brock C. Price and Xiangsheng Xu}\thanks
	{Department of Mathematics and Statistics, Mississippi State
		University, Mississippi State, MS 39762.
		{\it Email}: bcp193@msstate.edu (Brock C. Price); xxu@math.msstate.edu (Xiangsheng Xu).}
	\keywords{Crystal surface model, Existence of weak solutions, exponential function of a P-Laplacian, Nonlinear fourth order equations} \subjclass{}
	
	\begin{abstract} 
		In this article we prove the global existence of weak solutions to an initial boundary value problem with an exponential and p-Laplacian nonlinearity. The equation is a continuum limit of a family of kinetic Monte Carlo models of crystal surface relaxation. In our investigation we find a weak solution where the exponent in the equation, $-\Delta_p u$, can have a singular part in accordance with the Lebesgue Decomposition Theorem. The singular portion of $-\Delta_p u$ corresponds to where $-\Delta_p u = -\infty$, which leads it to have a canceling effect with the exponential nonlinearity. This effect has already been demonstrated for the case of a linear exponent $p=2$, and for the time independent problem. Our investigation reveals that we can exploit this same effect in the time dependent case with nonlinear exponent. We obtain a solution by first forming a sequence of approximate solutions and then passing to the limit. The key to our existence result lies in the observation that one can still obtain the precompactness of the term $e^{-\Delta_p u}$ despite a complete lack of estimates in the time direction. However, we must assume that $1<p\leq 2$.
	\end{abstract}	
	
	\maketitle

	\section{Introduction}

Let $\Omega$ be a bounded domain in $\Omega$ with Lipschitz boundary $\partial\Omega$, $T>0$, and $1<p\leq2$. In this article we consider the initial-boundary value problem,
\begin{align}
	\partial_t u - \Delta e^{-\Delta_p u} &= 0 \ \ \mbox{ in $\ot \equiv \Omega\times\left(0,T\right)$, } \label{a} \\
	\nabla u \cdot \nu = \nabla e^{-\Delta_p u} \cdot \nu & = 0 \ \ \mbox{ on $\Sigma_T \equiv \partial\Omega \times\left(0,T \right)$, } \label{b} \\
	u(x,0) &= u_0(x) \ \ \mbox{ in $\Omega\times\{0\}$. } \label{c} 
\end{align}

The equation \eqref{a} can be used to describe the relaxation of a crystal surface \cite{BCF,MW,KDM,YG,GLL,GLL2}. Below the roughing temperature, the surface of a crystal consists of steps and terraces. Atoms then detach from steps, diffuse across terraces, and then reattach at a new location \cite{BCF}.  In our context, $u(x,t)$ is the surface height. To get to our equation, we let $J$ be the adatom flux. We can then express the conservation of mass as,
\begin{equation}\label{com}
	\partial_t u + \mdiv J = 0. 
\end{equation}
Then, from Fick's Law \cite{MK}, we can express $J$ as,
\begin{equation}
	-M(\nabla u) \nabla \rho_s, 
\end{equation}
where $M$ is the mobility and $\rho_s$ is the local equilibrium density of adatoms. Using the Gibbs-Thomson relation we can write $\rho_s$ as,
\begin{equation}
	\rho_s = \rho_0 e^{\frac{\mu}{kT_0}}, 
\end{equation}
where $\mu$ represents the chemical potential, $\rho_0$ is the constant reference density, $k$ is the Boltzmann constant, and $T_0$ is temperature. We let $\Omega$ be the location of the steps, and the general surface energy $G(u)$ to be,
\begin{equation}
	G(u) = \frac{1}{p} \int_{\Omega} \left|\nabla u \right|^p dx, \ \ \mbox{ where $p\geq 1$. } 
\end{equation}
Then, the chemical potential is defined to be the change per atom in the surface energy, 
\begin{equation}
	\mu = \frac{\delta G}{\delta u} = - \Delta_p u. 
\end{equation}
In the diffusion-limited regime the dynamics are primarily due to the diffusion across terraces, and so we can take $M = 1$. We refer the reader to \cite{YG,XX2} for the case where $M$ is not a constant. Upon setting the other physical constants to be one our equation \eqref{com} reduces to,
\begin{equation}
	\partial_t u - \Delta e^{-\Delta_p u} = 0. 
\end{equation}

We now make the following definition of a weak solution to equation \eqref{a}-\eqref{c};
\begin{defn}\label{defn.of.weak.sol.}
	We say that a pair of functions $\left(\rho, u \right)$ is a weak solution of \eqref{a}-\eqref{c} if the following conditions hold:
	
	(D1) The function $u$ satisfies, $u \in L^{p}\left(0,T; W^{1,p}\left(\Omega\right)\right)$, $\partial_t u \in L^1\left(0,T;\left(W^{1,\infty}\left(\Omega\right)\right)^*\right)$, where $\left(W^{1,\infty}\left(\Omega\right)\right)^{*}$ denotes the dual space of $W^{1,\infty}\left(\Omega\right)$. 
	
	(D2) The function $\rho$ satisfies, $\rho\geq 0,\ \sqrt{\rho} \in L^2\left(0,T; \wot\right)$. 
	
	(D3) $\Delta_p u \in  \mathbb{M}\left(\overline{\ot}\right) \cap L^p\left(0,T; \left(W^{1,p}\left(\Omega\right)\right)^{*}\right)$, where $\mathbb{M}\left(\overline{\ot}\right)$ is the space of signed Radon measures on $\overline{\ot}$.
	
	(D4) The term $-\Delta_p u$ has a decomposition, from the Lebesgue Decomposition Theorem, 
	\begin{equation}
		-\Delta_p u = g_a + \nu_s, 
	\end{equation}
	where $g_a$ is the absolutely continuous part, and $\nu_s$ is the singular part, and the support of $\nu_s$, $A_0$, has Lebesgue measure zero. Then we have,
	\begin{equation}
		\rho = e^{g_a}, \ \ \mbox{ a.e. on $\ot$. } 
	\end{equation}

	(D5) The functions $\left(u,\rho\right)$ satisfy 
	the integral equation,
	\begin{equation}
		-\int_{\ot} u \partial_t \varphi dxdt + \int_{\ot} \nabla \rho \cdot \nabla \varphi dxdt = \int_{\Omega} u_0(x)\varphi(x,0) dx, \ \ \mbox{ for all smooth $\varphi$ that vanish for $t=T$. } \label{defnthree} 
	\end{equation}

(D6) There holds
\begin{equation}\label{rbx}
	\int_{0}^{T} \langle -\Delta_p u, \xi \rangle dt = \int_{\ot} \left|\nabla u \right|^{p-2} \nabla u \cdot \nabla \xi dxdt \ \ \mbox{ for all smooth $\xi$,}
\end{equation}
where $\langle \cdot, \cdot \rangle$ is the duality pairing between $\left(W^{1,p}\left(\Omega\right)\right)^{*}$ and $W^{1,p}\left(\Omega\right)$. 
\end{defn}

The initial condition and the second boundary condition in \eqref{b} have been incorporated into the integral equation \eqref{defnthree}, while the first boundary condition in \eqref{b} is implied by \eqref{rbx}.

We would like to remark that the singular part in $-\Delta_p u$ cannot be ruled out. (See for example \cite{LX,YG,GLL}.) Even when the exponent is linear, and the space dimension is only $1$, there is a counterexample in \cite{LX}, demonstrating that there can indeed be a singular part. In the case of a linear exponent, to remove the singularity one must impose some sort of smallness condition on the initial data \cite{LS,PX,BM}. We also see
 the gradient flow theory being employed for the existence assertion \cite{YG,GLL}. Our problem here does not seem to have a gradient flow structure. Various time-independent cases have been investigated in \cite{XX,XX2,XX3}.

Our main result is then the following, 
\begin{thm}[Main Theorem]\label{mainthm}
	Suppose that $\Omega$ is a bounded domain in $\mathbb{R}^N$ with Lipschitz boundary, and suppose that $u_0(x) \in W^{1,p}\left(\Omega\right)\cap L^2\left(\Omega\right)$. Then there exists a weak solution to \eqref{a}-\eqref{c} in the sense of Definition \ref{defn.of.weak.sol.}. 
\end{thm}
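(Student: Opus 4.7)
The plan is to construct a sequence of approximate solutions via regularization, extract uniform a priori estimates, and pass to the limit while carefully tracking the Lebesgue decomposition of $-\Delta_p u$. For the approximation I would use a semi-implicit time discretization of step $\tau$: given $u_\tau^{k-1}$, solve
\[
\frac{u_\tau^k - u_\tau^{k-1}}{\tau} - \Delta e^{-\Delta_p u_\tau^k} = 0
\]
with Neumann data. Introducing the dual variable $\rho_\tau^k = e^{-\Delta_p u_\tau^k}$ turns each step into a coupled elliptic system solvable by monotone-operator methods for the $p$-Laplacian, possibly after an additional exponential truncation to guarantee a bounded nonlinearity. Vanishing-truncation or vanishing-viscosity schemes would serve equally well; what matters is that the approximate solutions inherit the structural identities of \eqref{a}.

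The key a priori bound comes from formally testing \eqref{a} with $-\Delta_p u$ and using the Neumann boundary condition, which yields the entropy-dissipation identity
\[
\frac{d}{dt}\frac{1}{p}\int_\Omega |\nabla u|^p \, dx + 4\int_\Omega |\nabla \sqrt{\rho}|^2\, dx = 0,
\]
delivering $u \in L^\infty(0,T; W^{1,p}(\Omega))$ and $\sqrt{\rho} \in L^2(0,T;\wot)$ as demanded in (D1)--(D2). Mass conservation $\int_\Omega u(t)\,dx = \int_\Omega u_0\,dx$ together with testing \eqref{a} against $u$ yields an $L^\infty(0,T;L^2(\Omega))$ bound on $u$. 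The factorization $\nabla \rho = 2\sqrt{\rho}\,\nabla\sqrt{\rho}$ combined with an $L^\infty(0,T;L^1(\Omega))$ bound on $\rho$ (obtained from Poincar\'e applied to $\sqrt{\rho}$ and the entropy estimate) shows $\nabla \rho \in L^1(\ot)$, so that \eqref{a} gives $\partial_t u \in L^1(0,T;(W^{1,\infty}(\Omega))^*)$, completing (D1).

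The crux is the passage to the limit. From the $L^\infty(W^{1,p})$ bound on $u_\tau$ and the dual estimate on $\partial_t u_\tau$, Aubin--Lions produces $u_\tau \to u$ strongly in $L^p(\ot)$ and a.e.\ along a subsequence. A Minty-type monotonicity argument for the $p$-Laplacian identifies the weak limit of $|\nabla u_\tau|^{p-2}\nabla u_\tau$ with $|\nabla u|^{p-2}\nabla u$, so that $-\Delta_p u_\tau$ converges in $\mathbb{M}(\overline{\ot})$ to a signed measure which Lebesgue-decomposes as $g_a + \nu_s$ with $\nu_s$ supported on a Lebesgue-null set $A_0$. The decisive and hardest point is the precompactness of $\rho_\tau = e^{-\Delta_p u_\tau}$ despite the total absence of an a priori estimate on $\partial_t \rho_\tau$: the spatial regularity $\sqrt{\rho_\tau} \in L^2(0,T;\wot)$ must be leveraged via a Vitali-type argument, combined with the a.e.\ convergence of $-\Delta_p u_\tau$ off $A_0$, to conclude $\rho_\tau \to \rho$ in $L^1(\ot)$ with $\rho = e^{g_a}$ a.e. On $A_0$ one expects $-\Delta_p u_\tau \to +\infty$ and $\rho_\tau \to 0$, realizing the cancellation between the singular part and the exponential emphasized in the introduction. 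With $\rho_\tau \to \rho$ in $L^1(\ot)$ and a suitable weak convergence of $\nabla\rho_\tau$, one may pass to the limit in \eqref{defnthree} and \eqref{rbx} and thereby verify (D3)--(D6).
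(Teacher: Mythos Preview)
Your overall architecture (time discretization, entropy identity, Aubin--Lions for $u_\tau$) matches the paper, but the compactness core of your argument has a genuine gap. You propose to identify the weak limit of $|\nabla u_\tau|^{p-2}\nabla u_\tau$ by a Minty device and then invoke ``a.e.\ convergence of $-\Delta_p u_\tau$ off $A_0$'' to run a Vitali argument for $\rho_\tau$. But Minty yields only weak identification, never a.e.\ convergence of $-\Delta_p u_\tau$; and in fact Minty itself is problematic here because the right-hand side $\ln\rho_\tau$ is only bounded in $L^1(\ot)$, so the energy inequality $\limsup\int|\nabla u_\tau|^p \le \int|\nabla u|^p$ cannot be closed by testing with $u_\tau$. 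Your Vitali step is therefore unsupported: you have neither a time estimate on $\rho_\tau$ nor pointwise convergence of its logarithm.

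The paper resolves this with two linked lemmas that you are missing. First (Lemma~4.5), one obtains \emph{strong} convergence of $\nabla\bar u_j$ in $L^q(\ot)$ for every $q<p$ by subtracting the equations $-\Delta_p\bar u_{j_1}+\tau_1\bar u_{j_1}=\ln\bar\rho_{j_1}$, testing with the truncation $\gamma_\varepsilon(\bar u_{j_1}-\bar u_{j_2})$, and using Egoroff's theorem on the a.e.\ convergent $\bar u_j$ together with the $L^1$ bound on $\ln\bar\rho_j$; this upgrades $|\nabla\bar u_j|^{p-2}\nabla\bar u_j$ to strong convergence in $L^{p/(p-1)}$. Second (Lemma~4.6), one subtracts the same equations and tests with $\sqrt{\bar\rho_{j_1}}-\sqrt{\bar\rho_{j_2}}$: the right-hand side is a product of a strongly convergent and a weakly convergent sequence (this is where the $p\le 2$ hypothesis enters, via $\|\nabla\sqrt{\bar\rho_j}\|_p\le c\|\nabla\sqrt{\bar\rho_j}\|_2$), while the left-hand side is bounded below by $4\int(\bar\rho_{j_1}^{1/4}-\bar\rho_{j_2}^{1/4})^2$ thanks to the elementary inequality $(\sqrt a-\sqrt b)(\ln a-\ln b)\ge 4(a^{1/4}-b^{1/4})^2$. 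This delivers the missing compactness of $\sqrt{\bar\rho_j}$ in $L^2(\ot)$ without any time-derivative control on $\rho$. (Incidentally, on $A_0=\{\rho=0\}$ one has $-\Delta_p u=\ln\rho=-\infty$, not $+\infty$.)
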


A key assumption in our result is that $1<p\leq 2$. Even though $p>2$ should make analysis easier, our proof is not valid in this case. To see why we require that $1<p\leq 2$, we will go through the a-priori estimates which lead to our result. That is to say,  we will assume that we have a classical solution $u$ to equation \eqref{a} and derive estimates that $u$ must satisfy. To begin we first set $\rho = e^{-\Delta_p u}$, which gives us the system of equations,

\begin{align}
	\partial_t u - \Delta \rho &= 0, \ \ \mbox{ in $\ot$, } \label{one} \\
	-\Delta_p u & = \ln(\rho), \ \ \mbox{ in $\ot$, } \label{two}\\
	\nabla \rho\cdot \nu = \nabla u \cdot \nu &= 0, \ \ \mbox{ on $\partial \Omega$, }  \\
	u(x,0) & = u_0(x), \ \ \mbox{ on $\Omega$. } 
\end{align}

We then use $\ln(\rho)$ as a test function in \eqref{one} to get,
\begin{equation}
	\frac{1}{p} \frac{d}{dt} \int_{\Omega} \left|\nabla u \right|^p dx + \int_{\Omega} \left|\nabla \sqrt{\rho} \right|^2 dx = 0. 
\end{equation}
Then, integrate w.r.t. $t$ to get the first estimate,
\begin{equation}\label{apriorione}
	\sup_{0\leq t \leq T} \int_{\Omega} \left|\nabla u \right|^p dx + \int_{\ot} \left|\nabla \sqrt{\rho} \right|^2 dxdt \leq c \int_{\Omega} \left| \nabla u_0(x) \right|^p dx. 
\end{equation}

Next, we use $\sqrt{\rho}$ as a test function in \eqref{two} to achieve,
\begin{align*}
	\int_{\Omega} \sqrt{\rho} \ln(\rho) dx &= \int_{\Omega} \left|\nabla u \right|^{p-2} \nabla u \cdot \nabla \sqrt{\rho} dx \\
	& \leq \int_{\Omega} \left| \nabla u \right|^{p-1} \left| \nabla \sqrt{\rho} \right| dx \\
	& \leq \left\|\nabla u \right\|^{p-1}_p \left\| \nabla \sqrt{\rho} \right\|_p \\
	& \leq c \left\|\nabla u \right\|^{p-1}_p \left\| \nabla \sqrt{\rho} \right\|_2.
\end{align*}
The last step is due to the assumption $p\leq 2$. In view of \eqref{apriorione}, we would encounter an unbridgeable gap if we had $p>2$. To continue, for the left-hand side of the preceding inequality we have,  
\begin{equation*}
	\int_{\Omega} \sqrt{\rho} \ln(\rho) dx    \geq 2 \int_{\Omega}\left( \sqrt{\rho} - 1\right) dx . 
\end{equation*}
Plugging this back in we find,
\begin{equation}
	\int_{\Omega} \sqrt{\rho} dx \leq c \left\|\nabla u \right\|^{p-1}_p \left\| \nabla \sqrt{\rho} \right\|_2 + c. 
\end{equation}
Next, we use the Sobolev inequality,
\begin{equation}
	\left\| u \right\|_{q^*} \leq c \left( \left\| \nabla u \right\|_q + \left\| u \right\|_1 \right), 
\end{equation}
with $q=2$ to find,
\begin{align}
	\int_{\Omega} \left| \sqrt{\rho} \right|^2 dx & \leq c\left( \int_{\Omega} \left| \sqrt{\rho} \right|^{\frac{2N}{N-2}} dx \right)^{\frac{N-2}{N}} \\
	& \leq c \int_{\Omega} \left| \nabla \sqrt{\rho} \right|^2 dx + c \left(\int_{\Omega} \sqrt{\rho} dx \right)^2 \\
	& \leq c \int_{\Omega} \left| \nabla \sqrt{\rho} \right|^2 dx + c \left\|\nabla u \right\|^{2(p-1)}_p \int_{\Omega} \left|\nabla \sqrt{ \rho} \right|^2 dx + c,
\end{align}
Integrating this inequality with respect to $t$ we then find our second a-priori estimate,
\begin{equation}\label{aprioritwo}
	\int_{\ot} \left| \sqrt{\rho} \right|^2 dx dt \leq c \int_{\ot} \left| \nabla \sqrt{\rho} \right|^2 dx dt + c \left(\sup_{0\leq t \leq T}\left\|\nabla u \right\|_p \right)^{2(p-1)} \int_{\ot} \left| \nabla \sqrt{\rho} \right|^2 dx dt + c
\end{equation}

Next, we integrate \eqref{two} over $\Omega$ to find,
\begin{equation}
	\int_{\Omega} \ln(\rho) dx = 0. 
\end{equation}
Using this we then get our third estimate,
\begin{align}
	\int_{\ot} \left|\ln(\rho)\right| dxdt & = \int_{\ot} \ln^{+}(\rho) dxdt + \int_{\ot} \ln^{-}(\rho) dxdt \\
	& = 2 \int_{\ot} \ln^{+}(\rho) dxdt - \int_{\ot} \ln(\rho) dxdt \\
	& \leq 2 \int_{\ot} \sqrt{\rho} dxdt \\
	& \leq c. 
\end{align}

Finally we integrate \eqref{one} over $\Omega$ to find,
\begin{equation}
	\frac{d}{dt} \int_{\Omega} u dx = 0. 
\end{equation}
Subsequently we have,
\begin{equation}
	\int_{\Omega} u(x,t) dx = \int_{\Omega} u_0(x) dx. 
\end{equation}

Once again, our second estimate, \eqref{aprioritwo}, is only possible due to the constraint that $1<p\leq2$. This restriction will also come into play in a similar manner to demonstrate the precompactness of our approximating sequence for the term $\sqrt{\rho}$. This is accomplished despite the fact that we have no estimates in the time direction for $\rho$. 

\section{Preliminary Results}

In this section we will collect a number of key lemmas that we need. With our first lemma we collect several elementary inequalities that we will be using,
\begin{lemma}\label{inequalities}
	For $x,y \in \mathbb{R}^N$ and $a,b \in \mathbb{R}^{+}$, we have the inequalities,
	\begin{align}
		\left|x\right|^{p-2} x \cdot\left(x-y\right) \geq \frac{1}{p} \left(\left|x\right|^p - \left|y\right|^p \right); \\
		ab \leq \varepsilon a^p + \frac{1}{\varepsilon^{\frac{q}{p}}}b^q \ \ \mbox{ for $\varepsilon > 0$ and $p,q >1$ with $\frac{1}{p} + \frac{1}{q} = 1$, } \\
		\left(\sqrt{a}-\sqrt{b}\right)\left(\ln(a) - \ln(b) \right) \geq 4\left( a^{\frac{1}{4}} - b^{\frac{1}{4}} \right)^2. 
	\end{align}
	\end{lemma}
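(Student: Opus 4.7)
The plan is to verify each of the three inequalities in turn; each is a classical fact, so the proof should be a short sequence of standard manipulations rather than anything deep. The only one that requires a bit of care is the third, which I would prove via a Cauchy-Schwarz argument on integral representations.

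For the first inequality, I would observe that the function $F(x) = \frac{1}{p}|x|^p$ on $\mathbb{R}^N$ is convex (since $p>1$) with gradient $\nabla F(x) = |x|^{p-2}x$. The standard subgradient-convexity inequality $F(y) \geq F(x) + \nabla F(x)\cdot(y-x)$ rearranges directly to the claimed bound. For the second inequality, this is simply the scaled form of Young's inequality: starting from the elementary $AB \leq \frac{A^p}{p} + \frac{B^q}{q}$ (itself a convexity statement for $\exp$ or for $\log$), I would set $A = \lambda a$, $B = b/\lambda$ for $\lambda > 0$, giving
\begin{equation*}
ab \leq \frac{\lambda^p}{p} a^p + \frac{1}{q\lambda^q} b^q,
\end{equation*}
and then choose $\lambda$ so that $\lambda^p/p = \varepsilon$; the resulting coefficient of $b^q$ is a constant multiple of $\varepsilon^{-q/p}$, which is absorbed into the statement.

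The third inequality is the one I would spend the most time on. The cleanest route is a Cauchy-Schwarz argument. WLOG assume $a > b > 0$; then I would write each factor as an integral,
\begin{equation*}
\sqrt{a}-\sqrt{b} = \int_b^a \frac{ds}{2\sqrt{s}}, \qquad \ln a - \ln b = \int_b^a \frac{ds}{s}, \qquad a^{1/4}-b^{1/4} = \int_b^a \frac{ds}{4s^{3/4}}.
\end{equation*}
The key observation is that the integrand of the third equals $\frac{1}{\sqrt{8}}\sqrt{\frac{1}{2\sqrt{s}}\cdot \frac{1}{s}}$, so Cauchy-Schwarz yields
\begin{equation*}
\left(\int_b^a \frac{ds}{4 s^{3/4}}\right)^2 \leq \frac{1}{8}\int_b^a \frac{ds}{2\sqrt{s}}\cdot \int_b^a \frac{ds}{s} = \frac{1}{8}(\sqrt{a}-\sqrt{b})(\ln a - \ln b),
\end{equation*}
which gives $(\sqrt{a}-\sqrt{b})(\ln a - \ln b) \geq 8(a^{1/4}-b^{1/4})^2$, stronger than the stated bound with constant $4$. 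A fallback if one prefers no integral representation is the substitution $u=a^{1/4}$, $v=b^{1/4}$, which reduces the claim to the one-variable inequality $(u+v)\ln(u/v) \geq (u-v)$, verifiable by setting $t=u/v$ and checking that $g(t)=\ln t - (t-1)/(t+1)$ has $g(1)=0$ and $g'(t)=(t^2+1)/[t(t+1)^2] > 0$. Neither approach presents a serious obstacle; the whole lemma is just a toolkit of standard inequalities collected in one place.
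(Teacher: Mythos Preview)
Your proposal is correct. The paper itself does not give a proof of this lemma at all: it treats the first two inequalities as standard and simply cites \cite{LX} for the third. Your argument is therefore more detailed than what the paper provides. In particular, your Cauchy--Schwarz/integral-representation proof of the third inequality is a clean self-contained route and, as you note, yields the sharper constant $8$ in place of $4$; your fallback substitution $u=a^{1/4}$, $v=b^{1/4}$ with $g(t)=\ln t-(t-1)/(t+1)$ is also correct and gives exactly the constant $4$ stated. Either approach is perfectly adequate here, and the convexity/Young arguments for the first two inequalities are the standard ones.
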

The proof of the last inequality is contained in \cite{LX}.
\begin{lemma}\label{odenone}
	For $1<p\leq 2$, and $x,y \in \mathbb{R}^N$, 
	\begin{equation}
		\left(1+\left|x\right|^2+\left|y\right|^2\right)^{\frac{2-p}{2}}\left(\left(\left|x\right|^{p-2} x - \left|y\right|^{p-2} y\right)\cdot\left(x-y\right)\right) \geq (p-1)\left|x-y\right|^2. 
	\end{equation}
\end{lemma}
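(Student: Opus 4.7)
The plan is to prove this by the standard parameterized-line technique: write the left-hand factor $\bigl(|x|^{p-2}x - |y|^{p-2}y\bigr)\cdot(x-y)$ as an integral via the fundamental theorem of calculus, bound the integrand pointwise from below, and then estimate the length of the path. Set $z(t) = y + t(x-y)$ for $t \in [0,1]$, so that
\begin{equation*}
\bigl(|x|^{p-2}x - |y|^{p-2}y\bigr)\cdot(x-y) = \int_0^1 \frac{d}{dt}\bigl[|z(t)|^{p-2} z(t)\bigr]\cdot(x-y)\, dt.
\end{equation*}
Carrying out the differentiation gives the integrand
\begin{equation*}
|z|^{p-2}|x-y|^2 + (p-2)\,|z|^{p-4}\,\bigl(z\cdot(x-y)\bigr)^{2}.
\end{equation*}

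Next, since $p-2 \le 0$, the Cauchy--Schwarz bound $(z\cdot(x-y))^{2}\le |z|^{2}|x-y|^{2}$ works in the favorable direction and yields
\begin{equation*}
|z|^{p-2}|x-y|^2 + (p-2)|z|^{p-4}(z\cdot(x-y))^{2} \ge (p-1)\,|z|^{p-2}|x-y|^{2}.
\end{equation*}
A short convexity computation shows $|z(t)|^{2} = |(1-t)y + tx|^{2} \le (1-t)|y|^{2} + t|x|^{2} \le |x|^{2}+|y|^{2}$, so
\begin{equation*}
|z(t)|^{p-2} \ge \bigl(1+|x|^{2}+|y|^{2}\bigr)^{(p-2)/2}
\end{equation*}
because the exponent $p-2$ is non-positive. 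Integrating the resulting pointwise inequality in $t$ over $[0,1]$ and then multiplying both sides by $(1+|x|^{2}+|y|^{2})^{(2-p)/2}$ yields exactly the claimed inequality.

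The main delicate point is making sense of the integrand when the segment $z(t)$ passes through the origin, since $|z|^{p-2}$ and $|z|^{p-4}z$ are singular there. However, along the straight segment $|z(t)|$ vanishes at most at a single $t_{0}$ where $|z(t)|\sim c|t-t_{0}|$, and the restriction $1<p\le 2$ guarantees $\int |t-t_0|^{p-2}\,dt$ is finite, so both sides of the integral identity are absolutely convergent; the degenerate case $x=y=0$ is trivial. If one wishes to avoid this by-hand check, the same argument applied to the regularized field $(\varepsilon + |z|^{2})^{(p-2)/2}z$ proves the analogous inequality with $\varepsilon$ added to every squared norm, after which $\varepsilon \downarrow 0$ recovers the claim by monotone convergence. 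This regularization route is what I expect to write out, as it is the cleanest way around the only real obstacle in the proof.
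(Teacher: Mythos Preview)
Your argument is correct. The parameterized-segment computation, the Cauchy--Schwarz step exploiting $p-2\le 0$, and the convexity bound $|z(t)|^2\le |x|^2+|y|^2\le 1+|x|^2+|y|^2$ combine exactly as you describe; the singular case where the segment crosses the origin is handled by either of the two remedies you give.

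As for comparison with the paper: the paper does not prove this lemma at all. It simply states the inequality and refers the reader to \cite{JOden} (Oden, \emph{Qualitative Methods in Nonlinear Mechanics}). So your write-up is a genuine addition---a self-contained proof where the paper only cites a reference. The integral-along-the-segment technique you use is the standard route to such monotonicity/ellipticity inequalities for the $p$-Laplace vector field and is almost certainly what underlies the cited result, so in spirit there is no divergence; you have just filled in what the paper outsources.
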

The proof of this lemma can be found in \cite{JOden}. 

Our existence assertion will come as a consequence of the following fixed point theorem which is often called the Leray-Schauder Theorem. (\cite{GT}, chap. 11)
\begin{lemma}
	Let $T$ be a compact mapping of a Banach space $\mathbb{B}$ into itself, and suppose there exists a constant $M$ so that,
	\begin{equation}
		\left\| x \right\|_{\mathbb{B}} < M
	\end{equation}
	for all $x\in\mathbb{B}$ and $\sigma \in \left[0,1\right]$ which satisfy $x = \sigma T(x)$. Then $T$ has a fixed point. 
\end{lemma}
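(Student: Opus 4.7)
The plan is to reduce the theorem to Schauder's fixed point theorem via a radial retraction of the image of $T$ onto a large closed ball, and then to use the strict a priori bound $\|x\|<M$ to rule out the boundary case.

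First I would fix $M$ from the hypothesis and define the radial retraction $r:\mathbb{B}\to\overline{B_M}$, where $B_M:=\{x\in\mathbb{B}:\|x\|<M\}$, by $r(x)=x$ for $\|x\|\leq M$ and $r(x)=Mx/\|x\|$ for $\|x\|>M$. This map is continuous on $\mathbb{B}$, so the composition $\tilde T := r\circ T$ is a continuous self-map of $\overline{B_M}$. Since $T$ sends bounded sets to relatively compact sets and $r$ is continuous, $\tilde T$ is compact as well. The set $\overline{B_M}$ being closed, bounded, and convex in a Banach space, Schauder's fixed point theorem produces a point $x^{*}\in\overline{B_M}$ with $\tilde T(x^{*})=x^{*}$.

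Next I would promote $x^{*}$ to a genuine fixed point of $T$. If $\|T(x^{*})\|\leq M$, then $r(T(x^{*}))=T(x^{*})$ and we are done. Otherwise $\|T(x^{*})\|>M$, and
\[
x^{*}=r(T(x^{*}))=\frac{M}{\|T(x^{*})\|}\,T(x^{*})=\sigma T(x^{*}),\qquad \sigma:=\frac{M}{\|T(x^{*})\|}\in(0,1).
\]
The hypothesis then forces $\|x^{*}\|<M$, whereas the displayed identity yields $\|x^{*}\|=M$, a contradiction, so this case cannot occur.

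The main obstacle, insofar as there is one, is the observation that the strict inequality in the a priori bound is essential: if the hypothesis were relaxed to $\|x\|\leq M$ for all solutions of $x=\sigma T(x)$, the contradiction in the last step would collapse and the argument would not conclude. Beyond this, the proof is essentially a packaging of Schauder's theorem with a radial cutoff, which is why in Gilbarg--Trudinger this result appears as a short corollary of the Schauder theorem rather than as a standalone development.
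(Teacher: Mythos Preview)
Your argument is correct and is essentially the standard proof (Schaefer's theorem via Schauder plus radial retraction) found in Gilbarg--Trudinger. The paper itself does not prove this lemma at all---it is stated as a preliminary result with a citation to \cite{GT}, chap.~11---so there is no in-paper proof to compare against; your proposal simply supplies the omitted argument, and it matches the cited source.
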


A critical point in the proof of our main theoremm comes from being able to go from weak convergence to strong convergence for our sequence of approximate solutions. This will come as a consequence of the Lions-Aubin lemma \cite{S}, which is the next result.

\begin{lemma}[Lions-Aubin]\label{la}
	Let $X_0, X$ and $X_1$ be three Banach spaces with $X_0 \subseteq X \subseteq X_1$. Suppose that $X_0$ is compactly embedded in $X$ and that $X$ is continuously embedded in $X_1$. For $1 \leq p, q \leq \infty$, let
	\begin{equation*}
		W=\{u\in L^{p}([0,T];X_{0}): \partial_t u\in L^{q}([0,T];X_{1})\}.
	\end{equation*}
	Then:
	\begin{enumerate}
		\item[\textup{(i)}] If $p  < \infty$, then the embedding of W into $L^p([0, T]; X)$ is compact.
		\item[\textup{(ii)}] If $p  = \infty$ and $q  >  1$, then the embedding of W into $C([0, T]; X)$ is compact. 
	\end{enumerate}
\end{lemma}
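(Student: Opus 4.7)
The plan is to reduce the compactness assertion to Ehrling's interpolation lemma together with a time-equicontinuity estimate supplied by the bound on $\partial_t u$. The first step is Ehrling's lemma: because $X_0$ embeds compactly into $X$ and $X$ embeds continuously into $X_1$, for every $\eta>0$ there exists $C_\eta>0$ such that
\begin{equation*}
\|v\|_X \le \eta\,\|v\|_{X_0} + C_\eta\,\|v\|_{X_1}\qquad \text{for all } v\in X_0.
\end{equation*}
The standard argument is by contradiction: otherwise one finds $v_k\in X_0$ with $\|v_k\|_X=1$, $\|v_k\|_{X_0}$ bounded, and $\|v_k\|_{X_1}\to 0$; compactness extracts a subsequence converging in $X$ to some $v$ with $\|v\|_X=1$, but that same subsequence must converge to $0$ in $X_1$, a contradiction.

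For part (i), fix a bounded sequence $\{u_n\}\subset W$. Integrating Ehrling's inequality in $t$ gives
\begin{equation*}
\|u_n-u_m\|_{L^p(0,T;X)} \le \eta\,\|u_n-u_m\|_{L^p(0,T;X_0)} + C_\eta\,\|u_n-u_m\|_{L^p(0,T;X_1)},
\end{equation*}
so since $\eta$ is arbitrary and the $L^p(X_0)$-norms are uniformly bounded, it suffices to extract a subsequence which is Cauchy in $L^p(0,T;X_1)$. For this I would invoke the Kolmogorov--Fr\'echet--Riesz compactness criterion in $L^p(0,T;X_1)$. Equicontinuity in time follows from
\begin{equation*}
\|u_n(t+h)-u_n(t)\|_{X_1} \le \int_t^{t+h}\|\partial_s u_n(s)\|_{X_1}\,ds \le h^{1-1/q}\,\|\partial_t u_n\|_{L^q(0,T;X_1)},
\end{equation*}
uniformly in $n$. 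The averaged-precompactness condition of the Kolmogorov criterion is verified by noting that for any measurable $E\subset[0,T]$ the integrals $\int_E u_n\,dt$ are bounded in $X_0$, and the composite embedding of $X_0$ into $X_1$, obtained by composing the compact embedding $X_0\hookrightarrow X$ with the continuous embedding $X\hookrightarrow X_1$, is compact.

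For part (ii), when $p=\infty$ and $q>1$, the same translation estimate shows that $\{u_n\}$ is equi-H\"older continuous as a family of $X_1$-valued functions with exponent $1-1/q>0$, uniformly in $n$. A direct Arzel\`a--Ascoli argument then yields a subsequence converging in $C([0,T];X_1)$, and Ehrling's lemma applied uniformly in $t$ upgrades this convergence to $C([0,T];X)$. The main obstacle throughout is the mismatch of regularities: the sharper spatial control on $u_n$ lives in $L^p(0,T;X_0)$ while the temporal control $\partial_t u_n\in L^q(0,T;X_1)$ only sees the weaker norm. Ehrling's lemma is exactly the interpolation device that reconciles them, converting equicontinuity in the weak norm together with boundedness in the strong norm into genuine $X$-precompactness.
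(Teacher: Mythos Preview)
The paper does not supply its own proof of this lemma; it is quoted as a standard result with a citation to Simon \cite{S}. So there is no in-paper argument to compare against, and your outline is essentially the textbook route (Ehrling plus a translation/Kolmogorov-type compactness criterion), which is also the strategy behind Simon's proof.

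One genuine gap: your time-equicontinuity step
\[
\|u_n(t+h)-u_n(t)\|_{X_1}\le h^{\,1-1/q}\,\|\partial_t u_n\|_{L^q(0,T;X_1)}
\]
uses H\"older and collapses when $q=1$, giving no decay in $h$. Since part~(i) of the statement explicitly allows $q=1$ (and this is exactly the case invoked later in the paper, where $\partial_t\tilde u_j\in L^1(0,T;(W^{1,\infty})^*)$), you must handle it. The fix is to work with the $L^p$-translation norm directly rather than pointwise: setting $g_n(s)=\|\partial_s u_n(s)\|_{X_1}\in L^1(0,T)$,
\[
\int_0^{T-h}\|u_n(t+h)-u_n(t)\|_{X_1}^p\,dt
\le \int_0^{T-h}\Bigl(\int_t^{t+h} g_n\Bigr)^{p}dt
\le \|g_n\|_{L^1}^{\,p-1}\int_0^{T-h}\!\int_t^{t+h} g_n\,ds\,dt
\le h\,\|g_n\|_{L^1}^{\,p},
\]
which tends to $0$ uniformly in $n$. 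With this adjustment your argument goes through for all $1\le q\le\infty$ in part~(i); part~(ii) is fine as written since there you already assume $q>1$.
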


Our next lemma is an existence result for the problem,
\begin{align}
	-\Delta \rho + \tau\ln(\rho) &= f, \ \ \mbox{ in $\Omega$ } \label{rone} \\
	\nabla \rho \cdot \nu &= 0, \ \ \mbox{ on $\partial\Omega$. } \label{rtwo} 
\end{align}
A weak solution to \eqref{rone}-\eqref{rtwo} is a function $\rho \in \wot$ with,
\begin{equation}
	\rho > 0, \ \ \mbox{ a.e. on $\Omega$, } \ \ \ln(\rho) \in L^2\left(\Omega\right),
\end{equation}
which also satisfies the integral equation,
\begin{equation}
	\int_{\Omega} \nabla \rho \cdot \nabla \varphi dx + \tau \int_{\Omega} \ln(\rho) \varphi dx = \int_{\Omega} f\varphi dx \ \ \mbox{ for all $\varphi\in\wot$.} 
\end{equation}
Then we have the following existence theorem, (\cite{XX}, lemma 3.3)
\begin{lemma}\label{rhologexistence}
	Suppose $f\in L^2\left(\Omega\right)$, and $\tau > 0$. Then there is a unique weak solution to \eqref{rone}-\eqref{rtwo}. Furthermore, we have the estimate,
	\begin{equation}
		\left\|\tau\ln(\rho)\right\|_{2,\Omega} \leq \left\|f \right\|_{2,\Omega}. 
	\end{equation}
\end{lemma}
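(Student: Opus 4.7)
The plan is to establish existence by regularizing the singular nonlinearity $\ln(\rho)$, solving a sequence of approximate problems, deriving uniform a priori bounds (in particular the sharp estimate in the lemma), and passing to the limit; uniqueness will fall out of the strict monotonicity of $\ln$.

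First I would introduce a bounded Lipschitz monotone nondecreasing approximation of $\ln$, for instance
\begin{equation*}
\beta_n(s) = \max\{-n,\ \min\{n,\ \ln(\max(s,e^{-n}))\}\},
\end{equation*}
extended by $\beta_n(s)=-n$ for $s\leq 0$. For each fixed $n$ the map $\rho \mapsto -\Delta\rho + \tau\beta_n(\rho)$, read weakly on $W^{1,2}(\Omega)$ with Neumann data built into the test-function formulation, is monotone and hemicontinuous, and existence and uniqueness of $\rho_n\in W^{1,2}(\Omega)$ follow either by Browder--Minty or by the Leray--Schauder lemma quoted above applied to the solution map of the linear problem $-\Delta\rho+\rho = f - \tau\beta_n(v) + v$ — the needed compactness is automatic because $\beta_n$ is bounded and Lipschitz.

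The crucial uniform bound then comes from testing the approximate equation with $\tau\beta_n(\rho_n)$, which is admissible since $\beta_n(\rho_n)\in W^{1,2}(\Omega)$ with $\nabla\beta_n(\rho_n) = \beta_n'(\rho_n)\nabla\rho_n$. Because $\beta_n'\geq 0$ the diffusion term is nonnegative, and one obtains
\begin{equation*}
\int_\Omega (\tau\beta_n(\rho_n))^2\,dx \leq \int_\Omega f\cdot\tau\beta_n(\rho_n)\,dx \leq \|f\|_{2,\Omega}\|\tau\beta_n(\rho_n)\|_{2,\Omega},
\end{equation*}
hence $\|\tau\beta_n(\rho_n)\|_{2,\Omega}\leq \|f\|_{2,\Omega}$. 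Integrating the PDE over $\Omega$ gives $\tau\int_\Omega\beta_n(\rho_n)\,dx = \int_\Omega f\,dx$, and combined with Poincaré's inequality applied to $\rho_n-\bar\rho_n$ and the monotonicity of $\beta_n$, this yields a uniform $W^{1,2}$ bound on $\{\rho_n\}$.

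These bounds produce a subsequence (not relabeled) with $\rho_n\rightharpoonup\rho$ in $W^{1,2}(\Omega)$, $\rho_n\to\rho$ in $L^2(\Omega)$ and a.e., and $\tau\beta_n(\rho_n)\rightharpoonup g$ in $L^2(\Omega)$. The hard part — where the singularity of $\ln$ at $0$ actually bites — is identifying $g$ with $\tau\ln(\rho)$ while simultaneously ruling out $\rho=0$ on a set of positive measure. The uniform $L^2$ bound on $\tau\beta_n(\rho_n)$ forbids $\rho_n$ from clustering on the set $\{\rho_n\leq e^{-n}\}$ in any non-vanishing measure (on which $\beta_n\equiv -n$), and a Fatou-type argument applied to $-\beta_n(\rho_n)\chi_{\{\rho_n<1\}}\geq 0$, together with continuity of $\ln$ on $(0,\infty)$, identifies $g=\tau\ln(\rho)$ a.e. and forces $\rho>0$ a.e. Passing to the limit in the weak formulation of the regularized problem then produces the desired weak solution, and the bound $\|\tau\ln(\rho)\|_{2,\Omega}\leq\|f\|_{2,\Omega}$ survives by weak lower semicontinuity of the $L^2$ norm. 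Uniqueness is immediate: subtracting the equations for two solutions $\rho_1,\rho_2$ and testing with $\rho_1-\rho_2$ gives
\begin{equation*}
\int_\Omega|\nabla(\rho_1-\rho_2)|^2\,dx + \tau\int_\Omega(\ln\rho_1-\ln\rho_2)(\rho_1-\rho_2)\,dx = 0,
\end{equation*}
and both terms are nonnegative with the second strictly positive unless $\rho_1=\rho_2$.
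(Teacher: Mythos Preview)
The paper does not supply a proof of this lemma; it simply cites \cite{XX}, Lemma~3.3. Your proposal therefore cannot be compared against an argument in the present paper, but it is a correct and self-contained sketch of the standard approach: regularize the singular nonlinearity, obtain approximate solutions by monotone-operator or fixed-point methods, extract the sharp estimate by testing with $\tau\beta_n(\rho_n)$ (the monotonicity $\beta_n'\ge 0$ discards the gradient term with the right sign), and pass to the limit. The identification of the weak limit $g$ with $\tau\ln(\rho)$ via the uniform $L^2$ bound and Fatou, and the uniqueness argument from strict monotonicity of $\ln$, are both sound.

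One step is asserted too quickly. Your uniform $W^{1,2}$ bound on $\{\rho_n\}$ is claimed to follow from $\tau\int_\Omega\beta_n(\rho_n)\,dx=\int_\Omega f\,dx$, Poincar\'e on $\rho_n-\bar\rho_n$, and monotonicity, but none of these directly controls the mean $\bar\rho_n$. A clean way to close this is to test the approximate equation with $\rho_n-1$: since $\beta_n(1)=0$ and $\beta_n$ is nondecreasing, the term $\tau\int\beta_n(\rho_n)(\rho_n-1)$ is nonnegative, giving $\|\nabla\rho_n\|_2^2\le \|f\|_2\|\rho_n-1\|_2$. Combining this with Poincar\'e yields $\|\rho_n-\bar\rho_n\|_2\le C(1+|\bar\rho_n|^{1/2})$, and then the uniform $L^2$ bound on $\beta_n(\rho_n)$ together with a Chebyshev argument on $\{\rho_n>\bar\rho_n/2\}$ forbids $\bar\rho_n\to\infty$. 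This is routine, but as written your sentence leaves a genuine (if easily filled) gap.
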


\section{Approximate Problems}

In this section we investigate our approximate problems. We largely follow \cite{XX}. The approximation scheme is based on an implicit discretization in time. Let $\tau \in \left(0,1\right)$, and $v\in L^2\left(\Omega\right)$ be given, then our approximate problems are,
\begin{align}
	-\Delta \rho + \tau \ln\left(\rho\right) &= -\frac{u-v}{\tau}, \ \ \mbox{ in $\Omega$, } \label{approx.one} \\
	- \Delta_p u + \tau u &= \ln\left(\rho\right), \ \ \mbox{ in $\Omega$, } \label{approx.two} \\
	\nabla u \cdot \nu = \nabla \rho \cdot \nu & = 0, \ \ \mbox{ on $\partial\Omega$. } \label{approx.three}
\end{align}
In this section our aim is to establish the following existing theorem for \eqref{approx.one}-\eqref{approx.two},
\begin{prop}\label{approx.problem.exist}
	There exists a weak solution $\left(u,\rho\right)$ with $u\in W^{1,p}\left(\Omega\right)\cap L^2\left(\Omega\right)$, and $\rho \in \wot$, so that, $\rho > 0$ a.e. on $\Omega$, and $\ln(\rho) \in L^2\left(\Omega\right)$. 
\end{prop}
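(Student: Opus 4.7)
The plan is to apply the Leray--Schauder theorem to a solution operator obtained by decoupling the system. Given $w\in L^2(\Omega)$, first invoke Lemma \ref{rhologexistence} with $f:=-(w-v)/\tau\in L^2(\Omega)$ to produce the unique $\rho=\rho(w)\in\wot$ with $\rho>0$ a.e., $\ln\rho\in L^2(\Omega)$, solving \eqref{approx.one} under the Neumann condition and satisfying $\|\tau\ln\rho\|_2\le\|f\|_2$. Then, since $\ln\rho\in L^2(\Omega)$, the Minty--Browder theorem applied to the strictly monotone, coercive, hemicontinuous operator $u\mapsto -\Delta_p u+\tau u$ from $W^{1,p}(\Omega)\cap L^2(\Omega)$ into its dual yields a unique $u\in W^{1,p}(\Omega)\cap L^2(\Omega)$ solving \eqref{approx.two} under the Neumann condition. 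Setting $T(w):=u$ defines a map $T:L^2(\Omega)\to L^2(\Omega)$ whose fixed point $w=T(w)$, paired with the associated $\rho$, is the desired weak solution.

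To verify continuity and compactness of $T$: from Lemma \ref{rhologexistence} and testing the $\rho$-equation with $\rho_n$ (using $\rho_n\ln\rho_n\ge\rho_n-1$), any bounded sequence $\{w_n\}\subset L^2(\Omega)$ yields $\{\rho_n\}$ uniformly bounded in $\wot$ and $\{\ln\rho_n\}$ uniformly bounded in $L^2$. Rellich--Kondrachov extracts $\rho_n\to\rho$ strongly in $L^2$ and a.e. Testing the difference of the $\rho$-equations with $\rho_n-\rho$ and using the monotonicity of $\ln$ upgrades this to strong $\wot$ convergence (and hence a.e. convergence of $\ln\rho_n$). Then the energy identity
\begin{equation*}
4\int_\Omega|\nabla\sqrt{\rho_n}|^2+\tau\|\ln\rho_n\|_2^2=\int_\Omega f_n\ln\rho_n,
\end{equation*}
combined with weak--strong pairings afforded by the strong $\wot$ convergence of $\rho_n$, forces $\|\ln\rho_n\|_2\to\|\ln\rho\|_2$; together with weak $L^2$ convergence and the uniform convexity of $L^2$ this gives $\ln\rho_n\to\ln\rho$ strongly in $L^2$. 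Finally the strict monotonicity of $-\Delta_p+\tau I$ produces $u_n=T(w_n)\to u=T(w)$ strongly in $W^{1,p}(\Omega)\cap L^2(\Omega)$, and in particular in $L^2(\Omega)$; the same argument specialized to strongly convergent $\{w_n\}$ gives continuity.

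For the a priori bound, suppose $w=\sigma T(w)=\sigma u$ for some $\sigma\in[0,1]$, so $(u,\rho)$ satisfy
\begin{equation*}
-\Delta\rho+\tau\ln\rho=-(\sigma u-v)/\tau,\qquad -\Delta_p u+\tau u=\ln\rho.
\end{equation*}
Testing the first with $\ln\rho$ and the second with $(\sigma/\tau)u$, and adding, the cross terms $\pm(\sigma/\tau)\int u\ln\rho$ cancel, leaving
\begin{equation*}
\frac{\sigma}{\tau}\int_\Omega|\nabla u|^p+\sigma\|u\|_2^2+4\int_\Omega|\nabla\sqrt\rho|^2+\tau\|\ln\rho\|_2^2=\frac{1}{\tau}\int_\Omega v\ln\rho.
\end{equation*}
A Young inequality absorbs $(\tau/2)\|\ln\rho\|_2^2$ into the left, giving $\sigma\|u\|_2^2\le C(\tau)\|v\|_2^2$, so $\|w\|_2=\sigma\|u\|_2\le M(\tau,v)$ independently of $\sigma\in[0,1]$. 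The Leray--Schauder theorem then supplies the required fixed point.

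The chief obstacle is the compactness step, specifically promoting the weak $L^2$ convergence of the nonlinear quantity $\ln\rho_n$ to the strong convergence needed to close the $p$-Laplacian equation; this is resolved by the one-derivative gain for $\rho$ furnished by Lemma \ref{rhologexistence}, the monotonicity of $\ln$, and the energy identity driving norm convergence in $L^2$.
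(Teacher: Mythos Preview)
Your a priori bound and the continuity argument (for strongly convergent $w_n$) are essentially correct, but the compactness step has a genuine gap. When $\{w_n\}$ is merely bounded in $L^2(\Omega)$, the right-hand side $\int_\Omega f_n\ln\rho_n$ of your energy identity is a product of two sequences that are only \emph{weakly} convergent, so it need not converge; the strong $W^{1,2}$ convergence of $\rho_n$ does not help, since the identity involves $\nabla\sqrt{\rho_n}=\nabla\rho_n/(2\sqrt{\rho_n})$, whose $L^2$ norm you cannot control through $\|\nabla\rho_n\|_2$ without a uniform lower bound on $\rho_n$. Hence you cannot force $\|\ln\rho_n\|_2\to\|\ln\rho\|_2$, and without strong $L^2$ convergence of $\ln\rho_n$ the monotonicity argument does not yield strong $L^2$ convergence of $u_n=T(w_n)$. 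The deeper obstruction is that for $1<p<2N/(N+2)$ the embedding $W^{1,p}(\Omega)\hookrightarrow L^2(\Omega)$ is not compact, so the bound $u_n\in W^{1,p}\cap L^2$ alone cannot deliver precompactness of $\{u_n\}$ in $L^2$.

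The paper circumvents exactly this difficulty by first proving an auxiliary proposition with an added regularization $-\delta\Delta u$ in the $u$-equation. With this term the test $u_n$ in the $u$-equation gives a uniform $W^{1,2}$ bound, and Rellich immediately yields compactness of the Leray--Schauder map in $L^2$; continuity then follows from a Cauchy estimate in $W^{1,2}$ (and simultaneously in $W^{1,p}$, via Lemma~\ref{odenone}). Having obtained a fixed point $(u_\delta,\rho_\delta)$ for each $\delta>0$, the paper passes $\delta\to0$: the estimates that do not depend on $\delta$ (in particular the $W^{1,p}$ Cauchy estimate coming from Lemma~\ref{odenone}) survive, giving strong $W^{1,p}$ convergence of $u_\delta$ and hence a solution of \eqref{approx.one}--\eqref{approx.three}. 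If you want to avoid the $\delta$-regularization, you would need either a different base space for the fixed-point argument (which in turn requires extending Lemma~\ref{rhologexistence} beyond $L^2$ data) or an independent compactness mechanism for $\ln\rho_n$; neither is supplied in your sketch.
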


The existence of solutions to our approximate problems will be established via a weak convergence method. Before we prove Proposition \ref{approx.problem.exist}, we first need the following intermediate proposition;
\begin{prop}\label{approx.delta.problem.exist}
	For each $\delta \in \left(0,1\right)$, and $v\in L^2\left(\Omega\right)$ there is a unique solution $\left(\rho, u \right)$ in the space $\wot\times\wot$, to the problem,
	\begin{align}
		-\Delta \rho + \tau \ln\left(\rho\right) & = -\frac{u-v}{\tau}, \ \ \mbox{ in $\Omega$, } \label{delta.one} \\
		-\Delta_p u - \delta\Delta u + \tau u & = \ln\left(\rho\right), \ \ \mbox{ in $\Omega$, } \label{delta.two} \\
		\nabla u \cdot \nu = \nabla \rho \cdot \nu & = 0, \ \ \mbox{ on $\partial\Omega$. } \label{delta.three}
	\end{align}
\end{prop}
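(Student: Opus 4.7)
I will apply the Leray--Schauder fixed point theorem on $L^2(\Omega)$. Given $w\in L^2(\Omega)$, Lemma~\ref{rhologexistence} with right-hand side $-(w-v)/\tau$ produces the unique $\rho=\rho(w)\in\wot$ satisfying \eqref{delta.one} with $\rho>0$ a.e.\ and $\ln\rho\in L^2(\Omega)$. Since $p\le 2$ and $\Omega$ is bounded, $\wot\hookrightarrow W^{1,p}(\Omega)$; the operator $u\mapsto -\Delta_p u-\delta\Delta u+\tau u$ is then strictly monotone, hemicontinuous, and coercive on $\wot$ (the coercivity is guaranteed by the presence of both $\delta>0$ and $\tau>0$), so the theory of monotone operators (Browder's theorem) produces the unique $u=u(\rho)\in\wot$ solving \eqref{delta.two}. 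Set $T(w):=u$. Stability at both steps makes $T\colon L^2(\Omega)\to L^2(\Omega)$ continuous, and since $T$ factors through a bounded subset of $\wot$, the Rellich--Kondrachov compactness of $\wot\hookrightarrow L^2(\Omega)$ gives that $T$ is compact.

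\textbf{The central a priori estimate.} For the Leray--Schauder hypothesis I need a uniform bound on any $w$ satisfying $w=\sigma T(w)$ for $\sigma\in[0,1]$. Writing $u=T(w)$ and $w=\sigma u$, the pair $(\rho,u)$ solves
\[
-\Delta\rho+\tau\ln\rho=-\frac{\sigma u-v}{\tau},\qquad -\Delta_p u-\delta\Delta u+\tau u=\ln\rho.
\]
I test the first equation by $\ln\rho$ (rigorously justified by the truncation $\min(k,\max(-k,\ln\rho))$ and passing $k\to\infty$) and the second by $u$; then I add $(\sigma/\tau)$ times the second identity to the first. The crucial observation is that the coupling term $\int_\Omega u\ln\rho\,dx$ arises with opposite signs in the two identities and cancels, leaving
\[
\int_\Omega\frac{|\nabla\rho|^2}{\rho}\,dx+\tau\|\ln\rho\|_2^2+\frac{\sigma}{\tau}\int_\Omega|\nabla u|^p\,dx+\frac{\sigma\delta}{\tau}\|\nabla u\|_2^2+\sigma\|u\|_2^2=\frac{1}{\tau}\int_\Omega v\ln\rho\,dx.
\]
Young's inequality on the right-hand side lets me absorb $(\tau/2)\|\ln\rho\|_2^2$ into the left side, yielding in particular $\sigma\|u\|_2^2\le\|v\|_2^2/(2\tau^3)$, hence $\|w\|_2^2=\sigma^2\|u\|_2^2\le\|v\|_2^2/(2\tau^3)$---a bound independent of $\sigma$. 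The Leray--Schauder theorem then furnishes a fixed point $u=T(u)$, which together with the associated $\rho$ is the required solution.

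\textbf{Uniqueness and principal obstacle.} Uniqueness follows from a monotonicity computation: for two candidate solutions $(\rho_i,u_i)$, $i=1,2$, subtract the equations, test the first difference by $(u_1-u_2)/\tau$ and the second by $u_1-u_2$, and combine; the strict monotonicity of $\ln$ (in the first equation) and of $-\Delta_p u-\delta\Delta u+\tau u$ (using Lemma~\ref{odenone} for the $p$-Laplacian contribution and the $\delta$, $\tau$ terms for the remaining quadratic form) forces $u_1=u_2$, and then uniqueness for Lemma~\ref{rhologexistence} gives $\rho_1=\rho_2$. The main obstacle is clearly the a priori estimate above: a naive approach testing only \eqref{delta.two} by $u$ and combining with $\|\ln\rho\|_2\le\|u-v\|_2/\tau^2$ from Lemma~\ref{rhologexistence} gives $\|u\|_2\lesssim\tau^{-3}(\|u\|_2+\|v\|_2)$, which carries no information when $\tau<1$. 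The decisive idea is to pair each equation with its own characteristic nonlinearity---$\ln\rho$ for \eqref{delta.one} and $u$ for \eqref{delta.two}---so that the resulting cross term $\int_\Omega u\ln\rho\,dx$ appears identically on both sides and cancels, producing a closed uniform bound.
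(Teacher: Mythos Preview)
Your proposal is correct and follows essentially the same approach as the paper: both set up the same Leray--Schauder operator on $L^2(\Omega)$, and both obtain the a~priori bound by testing \eqref{delta.one} with $\ln\rho$ and \eqref{delta.two} with $u$ so that the cross term $\int_\Omega u\ln\rho\,dx$ is controlled. The only cosmetic difference is that you cancel this cross term exactly by adding $(\sigma/\tau)$ times the second identity, whereas the paper first observes from the second identity that $\int_\Omega u\ln\rho\,dx\ge 0$, uses this sign to bound $\|\ln\rho\|_2$, and then returns to bound $\|u\|_2$; also, you parametrize the fixed-point relation as $w=\sigma T(w)$ with $w=\sigma u$, while the paper writes it as $u=\sigma\mathbb{T}(u)$, which leads to $\sigma$ appearing in the second equation rather than the first---both are equivalent formulations.
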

\begin{proof}
	For the proof of existence we will use the Leray-Schauder Theorem. To do so, we define an Operator $\mathbb{T}:L^2\left(\Omega\right) \rightarrow L^2\left(\Omega\right)$ in the following manner; Given $w \in L^2\left(\Omega\right)$, we first define $\rho \in \wot$ to be the unique solution given by lemma \ref{rhologexistence} to,
	\begin{align}
		-\Delta \rho + \tau \ln(\rho) &= -\frac{w-v}{\tau}, \ \ \mbox{ in $\Omega$, } \\
		\nabla \rho \cdot \nu &= 0 \ \ \mbox{ on $\partial\Omega$. }
	\end{align}
	 
	 Next, we define $u \in\wot$ to be the unique solution to the equations,
	 \begin{align}
	 	-\Delta_p u - \delta \Delta u + \tau u &= \ln(\rho), \ \ \mbox{ in $\Omega$, } \\
	 	\nabla u \cdot \nu &= 0, \ \ \mbox{ on $\partial\Omega$. }
	 \end{align}
 	We then define $u = \mathbb{T}(w)$. From the uniqueness of the solutions $\rho$ and $u$ we can see that $\mathbb{T}$ is well-defined. To make use of the Leray-Schauder Theorem we need to show that $\mathbb{T}$ is continuous, maps bounded sets into precompact ones, and that there exists a constant $M$ so that,
 	\begin{equation}
 		\left\| u \right\|_{2,\Omega} \leq M,
 	\end{equation}
 	for all $u\in L^2\left(\Omega\right)$ and $\sigma \in \left[0,1\right]$ so that $ u = \sigma \mathbb{T}(u)$. We start by demonstrating the continuity of $\mathbb{T}$. Suppose that $\{w_n\}$ is a sequence of functions in $L^2\left(\Omega\right)$ so that, $w_n \rightarrow w$ in $L^2\left(\Omega\right)$ for some function $w\in L^2\left(\Omega\right)$. Next, set $u_n = \mathbb{T}(w_n)$ for all $n=1,2,3,...$ That is for each $n$ we have a pair $\left(\rho_n, u_n\right)$, which satisfy the equations,
 	\begin{align}
 		- \Delta \rho_n + \tau\ln(\rho_n) &= - \frac{w_n - v}{\tau}, \ \ \mbox{ in $\Omega$, } \label{rho.n} \\
 		-\Delta_p u_n -\delta\Delta u_n + \tau u_n &= \ln(\rho_n), \ \ \mbox{ in $\Omega$, } \label{u.n} \\
 		\nabla u_n \cdot \nu = \nabla \rho_n \cdot nu &= 0, \ \ \mbox{ on $\partial\Omega$. } \label{boundary.n}
 	\end{align}
 	By lemma \ref{rhologexistence} we then have the estimate,
 	\begin{equation}\label{logrho.n.est}
 		\left\|\ln(\rho_n) \right\|_{2,\Omega} \leq \frac{1}{\tau} \left\|\frac{w_n -v}{\tau} \right\|_{2,\Omega} \leq c(\tau).
 	\end{equation}
 	Next, we use $u_n$ as a test function in \eqref{u.n} to get,
 	\begin{align}
 		\int_{\Omega} \left|\nabla u_n \right|^p dx + \delta\int_{\Omega} \left|\nabla u_n \right|^2 dx + \tau\int_{\Omega} u^2_n dx &\leq \int_{\Omega} \ln(\rho_n) u_n dx \\
 		& \leq \varepsilon\int_{\Omega} u^2_n dx + \frac{1}{\varepsilon} \int_{\Omega} \ln^2(\rho) dx. 
 	\end{align}
 	By choosing $\varepsilon$ to be smaller than $\tau$ we get,
 	\begin{equation}\label{u.n.est}
 		\int_{\Omega} \left|\nabla u_n \right|^2 dx + \int_{\Omega} u^2_n dx \leq c(\delta,\tau). 
 	\end{equation}
 	Next, we use $\left(\rho_n - 1\right)$ as a test function in \eqref{rho.n} to obtain,
 	\begin{align}
 		\int_{\Omega} \left|\nabla\rho_n\right|^2 dx + &\tau\int_{\Omega} \ln(\rho_n)\left(\rho_n - 1\right) dx  = -\frac{1}{\tau}\int_{\Omega} \left(w_n -v\right)\left(\rho_n -1\right) dx \\
 		& \leq \frac{1}{\tau}\left\|w_n - v\right\|_{\frac{2N}{N+2},\Omega} \left\|\rho_n - 1 \right\|_{\frac{2N}{N-2},\Omega} \\
 		& \leq c\left\|w_n - v\right\|_{2,\Omega}\left( \left\|\nabla \rho_n \right\|_{2,\Omega} + \left\|\rho_n - 1 \right\|_{1,\Omega}\right) \\
 		& \leq \varepsilon \left\|\nabla\rho_n\right\|^2_{2,\Omega} + c(\varepsilon)\left\|w_n - v\right\|_{2,\Omega} + c\left\|w_n - v\right\|_{2,\Omega} \left\|\rho_n - 1\right\|_{1,\Omega}. 
 	\end{align}
 	By choosing $\varepsilon$ suitably small we obtain,
 	\begin{equation}\label{e.one}
 		\int_{\Omega} \left|\nabla \rho_n \right|^2 dx + \tau\int_{\Omega}\ln(\rho_n)\left(\rho_n -1\right) dx \leq c\left\|\rho_n -1 \right\|_{1,\Omega} + c. 
 	\end{equation}
 	Now, suppose that $M$ is any positive number. Then we use the inequality \eqref{e.one} to find,
 	\begin{align}
 		\int_{\Omega} \left|\rho_n - 1\right| dx & = \int_{\{\rho_n > M\}} \left|\rho_n - 1\right| dx + \int_{\{\rho_n \leq M\}} \left|\rho_n - 1\right| dx \\
 		& \leq \frac{1}{\ln(M)} \int_{\Omega} \ln(\rho_n)\left(\rho_n - 1\right) dx + c(M) \\
 		& \leq \frac{c}{\ln(M)} \int_{\Omega} \left|\rho_n - 1\right| dx + c(M). 
 	\end{align}
 	By choosing a sufficiently large $M$ we then find,
 	\begin{equation}
 		\int_{\Omega} \left|\rho_n - 1 \right| dx \leq c. 
 	\end{equation}
 	Using this is \eqref{e.one} then yields,
 	\begin{equation}
 		\int_{\Omega} \left|\nabla \rho_n \right|^2 dx \leq c. 
 	\end{equation}
 	Thus by the Sobolev Inequality, we can conclude that $\rho_n$ is bounded in $\wot$. Taking this and \eqref{u.n.est} into consideration we can conclude that (at least for a subsequence) there are functions $\left(u,\rho\right)$ so that,
 	\begin{align}
 		u_n \rightarrow u & \ \mbox{ weakly in $\wot$, and strongly in $L^2\left(\Omega\right)$, } \label{u.n.convergence}\\
 		\rho_n \rightarrow \rho & \ \mbox{ weakly in $\wot$, and strongly in $L^2\left(\Omega\right)$. } 
 	\end{align}
 	It then follows from \eqref{logrho.n.est} that,
 	\begin{equation}
 		\ln(\rho_n) \rightarrow \ln(\rho) \ \ \mbox{ strongly in $L^2(\Omega)$. }
 	\end{equation}
 	Now, by \eqref{u.n} we have,
 	\begin{equation}
 		-\mdiv\left[ \left|\nabla u_n\right|^{p-2}\nabla u_n - \left|\nabla u_m \right|^{p-2} \nabla u_m \right] - \delta\Delta \left(u_n -u_m\right) + \tau\left(u_n-u_m\right) = \ln(\rho_n) - \ln(\rho_m), \ \ \mbox{ in $\Omega$. } 
 	\end{equation}
 	We use the function $\left(u_n - u_m\right)$ as a test function in the above equation to then obtain,
 	\begin{align}\label{unum}
 		\int_{\Omega}& \left( \left|\nabla u_n\right|^{p-2}\nabla u_n - \left|\nabla u_m \right|^{p-2} \nabla u_m \right) \cdot\nabla\left(u_n-u_m\right) dx \\
 		&+\delta\int_{\Omega} \left|\nabla\left(u_n-u_m\right) \right|^2dx +\tau\int_{\Omega} \left(u_n-u_m\right)^2dx \\
 		& \leq \int_{\Omega}\left(\ln(\rho_n)-\ln(\rho_m)\right)\left(u_n-u_m\right) dx 
 	\end{align}
 	Then, we use lemma \ref{odenone} to find that,
 	\begin{align}
 		\int_{\Omega}& \left| \nabla\left(u_n-u_m\right) \right|^p dx \nonumber \\
 		& = \int_{\Omega} \left(1+\left|\nabla u_n\right|^2 + \left|\nabla u_m\right|^2 \right)^{\frac{p(2-p)}{4}} \frac{\left| \nabla\left(u_n-u_m\right) \right|^p }{\left(1+\left|\nabla u_n\right|^2 + \left|\nabla u_m\right|^2 \right)^{\frac{p(2-p)}{4}}} dx \nonumber \\
 		& \leq \left( \int_{\Omega} \left(1+\left|\nabla u_n\right|^2 + \left|\nabla u_m \right|^2\right)\left|\nabla\left(u_n - u_m \right) \right|^2 dx \right)^{\frac{p}{2}} \nonumber \\
 		&\cdot\left(\int_{\Omega} \left(1+\left|\nabla u_n \right|^2 + \left|\nabla u_m \right|^2 \right)^{\frac{p}{2}} dx \right)^{1-\frac{p}{2}} \nonumber \\
 		& \leq c \int_{\Omega} \left( \left|\nabla u_n\right|^{p-2}\nabla u_n - \left|\nabla u_m \right|^{p-2} \nabla u_m \right) \cdot\nabla\left(u_n-u_m\right) dx \label{redundant}
 	\end{align}
 	This estimate is not strictly necessary at this point. However, when it comes time to take the limit $\delta \rightarrow 0$ we will make use of it. Using \eqref{redundant} in \eqref{unum} we find,
 	\begin{align}
 		\int_{\Omega} \left| \nabla\left(u_n-u_m\right) \right|^p dx + \delta \int_{\Omega} \left| \nabla \left(u_n - u_m \right)\right|^2 dx \\
 		+ \tau \int_{\Omega} \left(u_n - u_m \right)^2 dx \leq \int_{\Omega} \left(\ln(\rho_n)-\ln(\rho_m)\right)\left(u_n-u_m\right) dx 
 	\end{align}
 	Subsequently, we can conclude that $\{u_n\}$ is precompact in $\wot$. At this point we can now pass to the limit in \eqref{rho.n}-\eqref{boundary.n}. Then, the uniqueness for solutions to \eqref{rho.n}-\eqref{boundary.n} implies that the whole sequence converges. Therefore we can conclude that $B$ is continuous. 
 	
 	The fact that $\mathbb{T}$ maps bounded sets into precompact ones is already demonstrated on account of \eqref{u.n.convergence}. The only piece remaining to be able to use the Leray-Schauder Theorem is to show the existence of a constant $M$ so that,
 	\begin{equation}
 		\left\|u \right\|_{2,\Omega} \leq M
 	\end{equation}
 	for all $ u \in L^2\left(\Omega\right)$ and $\sigma \in \left[0,1\right]$ satisfying $u=\sigma \mathbb{T}(u)$. This equation is equivalent to the boundary value problem,
 	\begin{align}
 		-\Delta \rho + \tau\ln(\rho) &= -\frac{u-v}{\tau} , \ \ \mbox{ in $\Omega$, } \label{rhoM} \\
 		-\mdiv\left[\left|\nabla\frac{u}{\sigma}\right|^{p-2} \nabla u \right] - \delta \Delta u + \tau u &= \sigma\ln(\rho), \ \ \mbox{ in $\Omega$, } \label{uM} \\
 		\nabla \rho \cdot\nu = \nabla u \cdot \nu & = 0, \ \ \mbox{ on $\partial\Omega$. } 
 	\end{align}
 	We use $\ln(\rho)$ as a test function in \eqref{rhoM} to find,
 	\begin{equation}\label{Mone}
 		\int_{\Omega} \left|\nabla\sqrt{\rho}\right|^2 dx + \tau \int_{\Omega} \ln^2(\rho) dx  = - \frac{1}{\tau}\int_{\Omega}\left(u-v\right) \ln(\rho) dx. 
 	\end{equation}
 	Next, we use $u$ as a test function in \eqref{uM} to obtain,
 	\begin{equation}\label{Mtwo}
 		\int_{\Omega} \ln(\rho) u dx = \int_{\Omega} \left|\nabla\frac{u}{\sigma}\right|^{p-2}\left|\nabla u \right|^2 dx + \delta \int_{\Omega} \left|\nabla u \right|^2 dx + \tau \int_{\Omega} u^2 dx \geq 0. 
 	\end{equation}
 	Incorporating this into \eqref{Mone} we obtain,
 	\begin{align}
 		\int_{\Omega} \left|\nabla\sqrt{\rho}\right|^2 dx + \tau \int_{\Omega} \ln^2(\rho) dx & \leq \frac{1}{\tau} \int_{\Omega} v \ln(\rho) dx \\
 		& \leq \varepsilon \int_{\Omega} \ln^2(\rho) dx + c(\varepsilon) \int_{\Omega} v^2 dx.
 	\end{align}
 	Upon choosing $\varepsilon$ sufficiently small we find,
 	\begin{equation}
 		\int_{\Omega} \left|\nabla\sqrt{\rho}\right|^2 dx + \tau \int_{\Omega} \ln^2(\rho) dx \leq c\int_{\Omega} v^2 dx \leq c. 
 	\end{equation}
 	Then, from equation \eqref{Mtwo} we find that,
 	\begin{align}
 		\tau \int_{\Omega} u^2 dx &\leq \int_{\Omega} \ln(\rho) u dx \\
 		& \leq \varepsilon \int_{\Omega} u^2 dx + c(\varepsilon) \ln^2(\rho) dx. 
 	\end{align}
 	Again choosing a sufficiently small $\varepsilon$ gives us,
 	\begin{equation}
 		\int_{\Omega} u^2 dx \leq c \int_{\Omega}\ln^2(\rho) dx \leq c. 
 	\end{equation}
 	At this point we are now ready to call upon the Leray-Schauder Theorem to conclude that $\mathbb{T}$ has a fixed point. Clearly a fixed point of $\mathbb{T}$ is a solution to \eqref{approx.one}-\eqref{approx.three}. The proof is complete. 
\end{proof}

We can now continue on to the proof of Proposition \ref{approx.problem.exist}. To do so we need to show that we can take $\delta \rightarrow 0$ in equations \eqref{delta.one}-\eqref{delta.three}. All of the key steps to being able to do so are already contained in the proof of Proposition \ref{approx.delta.problem.exist}. The difference being, we no longer have that $u_{\delta}$ converges strongly in $\wot$. However, on account of \eqref{redundant}, we are able to conclude that $u_{\delta}$ converges strongly in $W^{1,p}\left(\Omega\right)$. This is enough for us to be able to pass to the limit in \eqref{delta.one}-\eqref{delta.two}. The proof of Proposition \ref{approx.problem.exist} is complete. 

\section{Proof of the Main Theorem} 

In this section we will prove our Main Theorem. To prove our main theorem, we will first obtain estimates which are the discrete analogs of the a-priori estimates in the introduction. We then show that these estimates are enough for us to be able to pass to the limit. 

Let $T > 0$ be given. For all $j\in\{1,2,3,...\}$ we partition the time interval $\left[0,T\right]$ into $j$ equally sized sub-intervals. Then, we put,
\begin{equation}
	\tau = \frac{T}{j}.
\end{equation}
Now, let $u_0$ be a given function in $W^{1,p}\left(\Omega\right)\cap L^2\left(\Omega\right)$. Then for $k=1,2,...,j$, we use Proposition \ref{approx.problem.exist} to recursively solve the system of equations,
\begin{align}
	\frac{u_k - u_{k-1}}{\tau} - \Delta \rho_k + \tau\ln(\rho_k) &= 0, \ \ \mbox{in $\Omega$, } \label{mthmone} \\
	-\Delta_p u_k + \tau u_k &= \ln(\rho_k), \ \ \mbox{ in $\Omega$, } \label{mthmtwo} \\
	\nabla \rho_k \cdot \nu = \nabla u_k \cdot \nu &= 0, \ \ \mbox{ on $\partial\Omega$. } \label{mthmthree}
\end{align}
We introduce the functions,
\begin{align}
	\tuj &=  \frac{t-t_{k-1}}{\tau} u_k(x) + \left(1-\frac{t-t_{k-1}}{\tau} \right) u_{k-1}(x),  \ \ \mbox{ for $x\in\Omega$ and $t\in\left(t_{k-1}, t_k\right]$, }  \\
	\buj &=  u_k(x), \ \ \mbox{ for $x\in\Omega$ and $t\in\left(t_{k-1}, t_k\right]$, }   \\
	\brj &=  \rho_k(x) \ \ \mbox{ for $x\in\Omega$ and $t\in\left(t_{k-1}, t_k\right]$. }
\end{align}
Where $t_k = k\tau$. 

With these functions we can write the system \eqref{mthmone}-\eqref{mthmthree} as,
\begin{align}
	\partial_t \tuj -\Delta \brj + \tau \ln(\brj) & = 0, \ \ \mbox{ in $\ot$, } \label{mthmfour} \\
	-\Delta_p \buj + \tau \buj &= \ln(\brj), \ \ \mbox{ in $\ot$. } \label{mthmfive} 
\end{align}

We proceed to derive estimates for the sequences, $\{\tuj,\buj,\brj\}$ which are independent of $\tau$. First, we have the discrete analog of \eqref{apriorione}, 
\begin{lemma}\label{lemma4.1}
	We have the estimate,
	\begin{equation}
		\begin{aligned}
			\frac{1}{p}& \max_{0 \leq t \leq T} \int_{\Omega} \left|\nabla \buj\right|^p dx + 4 \int_{\ot} \left|\nabla\sqrt{\brj}\right|^2 dxdt \\
			& + \frac{\tau}{2} \max_{0 \leq t \leq T} \int_{\Omega} \buj^2 dx + \tau \int_{\ot} \ln^2(\brj) dxdt \\
			& \leq \frac{1}{p} \int_{\Omega} \left|\nabla u_0(x)\right|^p dx + \frac{\tau}{2} \int_{\Omega} u^2_0(x) dx. 
		\end{aligned}
	\end{equation}
\end{lemma}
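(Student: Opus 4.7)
The plan is to mimic the continuous a priori derivation of \eqref{apriorione}, but at the level of the implicit time discretization. The ingredient that makes this clean is that in \eqref{mthmtwo} the quantity $\ln(\brj)$ is precisely $-\Delta_p \buj + \tau \buj$, so testing the first equation with $\ln(\rho_k)$ automatically produces the discrete time derivatives of $\frac{1}{p}\int|\nabla u_k|^p$ and $\frac{1}{2}\int u_k^2$, provided the convexity inequalities from Lemma \ref{inequalities} are applied.

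Concretely, I would fix $k$ and multiply \eqref{mthmone} by $\ln(\rho_k)$, integrating over $\Omega$. The Laplacian term integrates by parts, using the Neumann condition in \eqref{mthmthree}, to give $\int_\Omega \nabla\rho_k\cdot\nabla\ln(\rho_k)\,dx = 4\int_\Omega|\nabla\sqrt{\rho_k}|^2\,dx$, while the zeroth-order term just gives $\tau\int_\Omega\ln^2(\rho_k)\,dx$. For the discrete time derivative term, I substitute $\ln(\rho_k)=-\Delta_p u_k+\tau u_k$ from \eqref{mthmtwo} and integrate by parts, obtaining
\begin{equation*}
\int_\Omega\frac{u_k-u_{k-1}}{\tau}\ln(\rho_k)\,dx
=\frac{1}{\tau}\int_\Omega|\nabla u_k|^{p-2}\nabla u_k\cdot\nabla(u_k-u_{k-1})\,dx
+\int_\Omega (u_k-u_{k-1})u_k\,dx.
\end{equation*}
Now I apply the first inequality of Lemma \ref{inequalities} with $x=\nabla u_k,\ y=\nabla u_{k-1}$ to bound the first term below by $\frac{1}{p\tau}\int_\Omega(|\nabla u_k|^p-|\nabla u_{k-1}|^p)\,dx$, and I use the elementary identity $a(a-b)=\frac{1}{2}(a^2-b^2)+\frac{1}{2}(a-b)^2$ for the second term.

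Combining, I obtain for each $k$ the one-step dissipation inequality
\begin{equation*}
\frac{1}{p}\int_\Omega\bigl(|\nabla u_k|^p-|\nabla u_{k-1}|^p\bigr)\,dx+\frac{\tau}{2}\int_\Omega(u_k^2-u_{k-1}^2)\,dx+4\tau\int_\Omega|\nabla\sqrt{\rho_k}|^2\,dx+\tau^2\int_\Omega\ln^2(\rho_k)\,dx\le 0.
\end{equation*}
Summing from $k=1$ to any $k^\ast\le j$, the first two terms telescope, leaving $\frac{1}{p}\int|\nabla u_{k^\ast}|^p+\frac{\tau}{2}\int u_{k^\ast}^2$ on the left minus the analogous quantities at $k=0$. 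Taking the maximum over $k^\ast$ and recognizing, via the definitions of $\buj$ and $\brj$ as step functions in $t$, that $\tau\sum_{k=1}^{j}\int_\Omega(\,\cdot\,)\,dx$ equals $\int_{\ot}(\,\cdot\,)\,dxdt$ of the corresponding step function, yields exactly the stated bound.

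The only genuine obstacle is the discrete time-derivative term; everything else is a straightforward integration by parts. The substitution via \eqref{mthmtwo} is what couples the two equations and converts a term that has no obvious sign into a quasi-monotonically decreasing energy, and the convexity inequality in Lemma \ref{inequalities} is what converts the $p$-Laplacian pairing into a telescoping difference rather than a one-sided estimate that would lose a constant.
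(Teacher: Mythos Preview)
Your proof is correct and follows essentially the same route as the paper: test \eqref{mthmone} with $\ln(\rho_k)$, use \eqref{mthmtwo} (equivalently, test \eqref{mthmtwo} with $u_k-u_{k-1}$) together with the convexity inequality from Lemma~\ref{inequalities} to turn the cross term into a telescoping difference, and then sum in $k$. Your treatment of the summation/maximum step is in fact slightly more careful than the paper's, which simply says ``summing over $k$ gives the lemma.''
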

\begin{proof}
	We first multiply through \eqref{mthmone} by $\ln(\rho_k)$ and integrate the resulting equation over $\Omega$ to obtain,
	\begin{equation}
		\int_{\Omega} \left(\frac{u_k - u_{k-1}}{\tau}\right) \ln(\rho_k) dx + 4\int_{\Omega} \left|\nabla \sqrt{\rho_k} \right|^2 dx + \tau\int_{\Omega} \ln^2(\rho_k) dx = 0. \label{4.1.1}
	\end{equation}
	Next, we multiply through equation \eqref{mthmtwo} by $\left(u_k - u_{k-1}\right)$ to find,
	\begin{equation}
		\begin{aligned}
			\int_{\Omega} \left(u_k - u_{k-1}\right)\ln(\rho_k) dx & = \tau \int_{\Omega} u_k\left(u_k - u_{k-1}\right) dx + \int_{\Omega} \left|\nabla u_k \right|^{p-2} \nabla u_k \cdot \nabla \left(u_k - u_{k-1}\right) dx \\
			& \geq \frac{\tau}{2}\int_{\Omega} \left( \left|u_k \right|^2 - \left|u_{k-1}\right|^2 \right) dx + \frac{1}{p} \int_{\Omega} \left(\left|\nabla u_k\right|^p + \left|\nabla u_{k-1} \right|^p \right) dx
		\end{aligned}
	\end{equation}
	Upon substituting this back into \eqref{4.1.1} we have,
	\begin{equation}
		\frac{1}{p\tau} \int_{\Omega} \left(\left|\nabla u_k\right|^p + \left|\nabla u_{k-1} \right|^p \right) dx + \frac{1}{2} \int_{\Omega}\left( \left|u_k \right|^2 - \left|u_{k-1}\right|^2 \right) dx  + 4\int_{\Omega} \left|\nabla \sqrt{\rho_k} \right|^2 dx + \tau\int_{\Omega} \ln^2(\rho_k) dx \leq 0.
	\end{equation}
	Multiplying through this inequality by $\tau$ and then summing the resulting equation over $k$ gives us the lemma. 
\end{proof}

Next, we have the discrete version of \eqref{aprioritwo},
\begin{lemma}\label{lemma4.2}
	There exists a constant $c$ which is independent of $\tau$ so that,
	\begin{equation}
		\int_{\ot} \left|\sqrt{\brj}\right|^2 dxdt \leq c. 
	\end{equation}
\end{lemma}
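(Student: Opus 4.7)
The plan is to mimic, at the discrete level, the a-priori derivation of \eqref{aprioritwo}, adapted to handle the extra regularization term $\tau\buj$ appearing in \eqref{mthmfive}.

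First, I would test \eqref{mthmfive} pointwise in $t$ against $\sqrt{\brj}$; note that Lemma \ref{lemma4.1} already guarantees $\sqrt{\brj}\in W^{1,2}(\Omega)\subset W^{1,p}(\Omega)$ since $\Omega$ is bounded and $p\le 2$. This yields
\begin{equation*}
\int_{\Omega} |\nabla \buj|^{p-2}\nabla \buj\cdot\nabla\sqrt{\brj}\,dx + \tau\int_{\Omega} \buj\sqrt{\brj}\,dx = \int_{\Omega} \sqrt{\brj}\ln(\brj)\,dx.
\end{equation*}
On the right I would apply the elementary inequality $\sqrt{x}\ln(x)\ge 2(\sqrt{x}-1)$; on the $\nabla\buj$ term I would use H\"older's inequality and, crucially, the bound $\|\nabla\sqrt{\brj}\|_p\le c\|\nabla\sqrt{\brj}\|_2$ which is valid precisely because $p\le 2$ and $\Omega$ is bounded. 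This produces
\begin{equation*}
2\int_{\Omega} \sqrt{\brj}\,dx \le c\|\nabla\buj\|_p^{p-1}\|\nabla\sqrt{\brj}\|_2 + \tau\int_{\Omega} |\buj|\sqrt{\brj}\,dx + 2|\Omega|.
\end{equation*}

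Next I would square both sides and control the stray regularization term via Cauchy--Schwarz and the bound $\tau\max_{0\le t\le T}\|\buj\|_2^2\le c$ from Lemma \ref{lemma4.1}:
\begin{equation*}
\tau^2\Bigl(\int_{\Omega} |\buj|\sqrt{\brj}\,dx\Bigr)^2 \le \tau^2\|\buj\|_2^2\int_{\Omega} \brj\,dx \le c\tau\int_{\Omega}\brj\,dx.
\end{equation*}
Combining this with the Sobolev embedding argument used for \eqref{aprioritwo}, namely
\begin{equation*}
\int_{\Omega}\brj\,dx=\|\sqrt{\brj}\|_2^2 \le c\|\nabla\sqrt{\brj}\|_2^2 + c\Bigl(\int_{\Omega}\sqrt{\brj}\,dx\Bigr)^2,
\end{equation*}
gives
\begin{equation*}
\int_{\Omega} \brj\,dx \le c\int_{\Omega} |\nabla\sqrt{\brj}|^2\,dx + c\|\nabla\buj\|_p^{2(p-1)}\|\nabla\sqrt{\brj}\|_2^2 + c\tau\int_{\Omega}\brj\,dx + c.
\end{equation*}
For $\tau$ small enough the last term on the right is absorbed into the left. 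Finally, I would integrate in $t\in(0,T)$ and invoke Lemma \ref{lemma4.1}, which provides $\tau$-uniform bounds on both $\sup_t\|\nabla\buj\|_p$ and $\int_{\ot}|\nabla\sqrt{\brj}|^2\,dxdt$, to conclude $\int_{\ot} \brj\,dxdt\le c$.

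The main technical obstacle I anticipate is precisely the extra $\tau\buj$ regularization term, which is absent from the continuous a-priori calculation. Since Lemma \ref{lemma4.1} only bounds $\tau^{1/2}\|\buj\|_2$ (not $\|\buj\|_2$ itself), a naive estimate would blow up. The trick is to keep the factor of $\tau$ squared after squaring the testing identity, so that the combined factor $\tau^2\|\buj\|_2^2$ is of order $\tau$, contributing only $O(\tau)\int\brj$ which is harmless for small $\tau$. A secondary minor point is that the Sobolev step as written assumes $N\ge 3$; for $N=1,2$ the corresponding embeddings are better and the same scheme goes through.
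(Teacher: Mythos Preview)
Your proposal is correct and follows essentially the same route as the paper: test \eqref{mthmfive} with $\sqrt{\brj}$, use $\sqrt{x}\ln x\ge 2(\sqrt{x}-1)$, apply H\"older together with $p\le 2$ on the gradient term, handle the extra $\tau\buj$ term via Cauchy--Schwarz so that after squaring it contributes $c\tau\int_\Omega\brj\,dx$ (absorbable for small $\tau$), feed the $L^1$ bound on $\sqrt{\brj}$ into the Sobolev inequality, and finish by integrating in $t$ and invoking Lemma~\ref{lemma4.1}. Your write-up is in fact slightly cleaner than the paper's, which carries a harmless typo ($|\sqrt{\rho_k}|^2$ in place of $|\nabla\sqrt{\rho_k}|^2$) in the last displayed line before summing.
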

\begin{proof}
	We use $\sqrt{\rho_k}$ as a test function in \eqref{mthmtwo} fo find,
	\begin{equation}\label{4.2.1}
		\begin{aligned}
		\int_{\Omega} \sqrt{\rho_k} \ln(\rho_k) dx &= \tau \int_{\Omega} u_k \sqrt{\rho_k} dx + \int_{\Omega} \left|\nabla u_k \right|^{p-2}\nabla u_k \cdot \nabla \sqrt{\rho_k} dx \\
		& \leq \tau\int_{\Omega}u_k \sqrt{\rho_k} dx + \left\|\nabla u_k \right\|^{(p-1)}_{p,\Omega} \left\|\nabla \sqrt{\rho_k} \right\|_{2,\Omega} 
		\end{aligned}
	\end{equation}
Then, we have,
\begin{equation}
	\int_{\Omega} \sqrt{\rho_k} \ln(\rho_k) dx \geq 2\int_{\Omega} \left(\sqrt{\rho_k} -1\right) dx. 
\end{equation}
Putting this back into \eqref{4.2.1} gives us,
\begin{equation}
	\int_{\Omega} \sqrt{\rho_k} dx \leq \tau\left\|u_k \right\|_{2,\Omega} \left\|\sqrt{\rho_k}\right\|_{2,\Omega} + \left\|\nabla u_k \right\|^{(p-1)}_{p,\Omega} \left\|\nabla \sqrt{\rho_k} \right\|_{2,\Omega} + c
\end{equation}
Then, by the Sobolev Inequality,
\begin{equation}
	\begin{aligned}
	\int_{\Omega} \left|\sqrt{\rho_k}\right|^2 dx & \leq c\left( \int_{\Omega} \left|\sqrt{\rho_k}\right|^{\frac{2N}{N-2}} dx \right)^{\frac{N-2}{N}} \\
	& \leq c \int_{\Omega} \left|\nabla\sqrt{\rho_k}\right|^2 dx + c\left(\int_{\Omega} \sqrt{\rho} dx \right)^2 \\
	& \leq c \int_{\Omega} \left|\nabla\sqrt{\rho_k}\right|^2 dx + c\tau\left(\tau\max_{0 \leq t \leq T} \int_{\Omega} \buj^2 dx \right) \int_{\Omega} \left|\sqrt{\rho_k} \right|^2 dx \\
	&+ c\left(\max_{0 \leq t \leq T} \left\|\nabla \buj \right\|_{p,\Omega}\right)^{2(p-1)} \int_{\Omega} \left|\sqrt{\rho_k} \right|^2 dx
	\end{aligned}
\end{equation}
Then for sufficiently small $\tau$, (or sufficiently large $j$) we find,
\begin{equation}
	\int_{\Omega} \left|\sqrt{\rho_k}\right|^2 dx \leq  c \int_{\Omega} \left|\nabla\sqrt{\rho_k}\right|^2 dx + c\left(\max_{0 \leq t \leq T} \left\|\nabla \buj \right\|_{p,\Omega}\right)^{2(p-1)} \int_{\Omega} \left|\sqrt{\rho_k} \right|^2 dx.
\end{equation}
Summing up this equation over $k$ we find,
\begin{equation}
	\int_{\ot} \left|\sqrt{\brj}\right|^2 dxdt \leq c\left( 1 +  c\left(\max_{0 \leq t \leq T} \left\|\nabla \buj \right\|_{p,\Omega}\right)^{2(p-1)}\right) \int_{\ot} \left|\nabla\sqrt{\brj}\right|^2 dxdt. 
\end{equation}
The lemma then follows. 
\end{proof}

From lemmas \ref{lemma4.1} and \ref{lemma4.2} we can then conclude that the sequence $\{\sqrt{\brj}\}$ is bounded in $L^2\left(0,T;\wot\right)$. Then, from lemma \ref{lemma4.1} we also have that $\left|\nabla \buj \right|$ is bounded in $L^{\infty}\left(0,T; L^p\left(\Omega\right)\right)$.
\begin{lemma}\label{lemma4.3}
	The sequence $\{\tuj\}$, is precompact in the space $L^p\left(\ot\right)$. 
\end{lemma}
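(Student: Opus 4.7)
The plan is to deduce precompactness of $\{\tuj\}$ in $L^p(\ot)$ via the Lions-Aubin lemma (Lemma \ref{la}) applied with the triple
\[
X_0 = W^{1,p}(\Omega), \qquad X = L^p(\Omega), \qquad X_1 = \bigl(W^{1,\infty}(\Omega)\bigr)^*.
\]
The Rellich-Kondrachov theorem gives $X_0\hookrightarrow\hookrightarrow X$, and the continuous embedding $L^p(\Omega) \hookrightarrow L^1(\Omega) \hookrightarrow X_1$ is standard. What remains is to control $\tuj$ in $L^p(0,T;X_0)$ and $\partial_t\tuj$ in $L^1(0,T;X_1)$ uniformly in $j$.

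The first bound is essentially free from Lemma \ref{lemma4.1}: since $\tuj$ is the piecewise linear interpolant of $\{u_k\}$, $\nabla \tuj$ is the piecewise linear interpolant of $\{\nabla u_k\}$, so $\sup_{t}\|\nabla\tuj\|_{p,\Omega}$ and $\sup_t\|\tuj\|_{2,\Omega}$ are both controlled in terms of $\|\nabla u_0\|_{p,\Omega}$ and $\|u_0\|_{2,\Omega}$. This yields a uniform bound on $\tuj$ in $L^\infty(0,T;W^{1,p}(\Omega))$, hence in $L^p(0,T;W^{1,p}(\Omega))$.

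For the time derivative I would use \eqref{mthmfour}: for any $\varphi \in W^{1,\infty}(\Omega)$,
\begin{align*}
\Bigl|\int_\Omega \partial_t\tuj\,\varphi\,dx\Bigr|
&= \Bigl|\int_\Omega \nabla\brj\cdot\nabla\varphi\,dx + \tau\int_\Omega \ln(\brj)\,\varphi\,dx\Bigr| \\
&\le \|\nabla\varphi\|_\infty \int_\Omega |\nabla\brj|\,dx + \tau\,\|\varphi\|_\infty\,|\Omega|^{1/2}\,\|\ln(\brj)\|_{2,\Omega}.
\end{align*}
The key identity is $|\nabla\brj| = 2\sqrt{\brj}\,|\nabla\sqrt{\brj}|$, so by Cauchy-Schwarz in space and then in time,
\[
\int_0^T\!\!\int_\Omega |\nabla\brj|\,dx\,dt \le 2\Bigl(\int_{\ot}\brj\,dx\,dt\Bigr)^{1/2}\Bigl(\int_{\ot}|\nabla\sqrt{\brj}|^2\,dx\,dt\Bigr)^{1/2},
\]
which is uniformly bounded by Lemmas \ref{lemma4.1} and \ref{lemma4.2}. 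The logarithmic term is even better: Lemma \ref{lemma4.1} gives $\tau\int_0^T\|\ln(\brj)\|_{2,\Omega}^2\,dt \le c$, so by Cauchy-Schwarz in time the contribution is $O(\sqrt{\tau})$. Together these show $\partial_t\tuj$ is bounded in $L^1(0,T;(W^{1,\infty}(\Omega))^*)$ uniformly in $j$.

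With both bounds in hand, Lemma \ref{la}(i) (with $p<\infty$) yields the compact embedding, and hence $\{\tuj\}$ is precompact in $L^p(0,T;L^p(\Omega)) = L^p(\ot)$. The main subtlety is the handling of $\nabla\brj$: we have no direct $L^q$ estimate on $\brj$ in $W^{1,q}$, only the $\sqrt{\brj}$ estimates, so the factorization $\nabla\brj = 2\sqrt{\brj}\nabla\sqrt{\brj}$ and the $L^2(\ot)$ bound on $\sqrt{\brj}$ from Lemma \ref{lemma4.2} are essential. The restriction $p\le 2$ enters only indirectly here, through the a priori bound of Lemma \ref{lemma4.2}.
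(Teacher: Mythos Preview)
Your overall strategy matches the paper's --- bound $\tuj$ in $L^p(0,T;W^{1,p}(\Omega))$, bound $\partial_t\tuj$ in $L^1(0,T;(W^{1,\infty}(\Omega))^*)$ via \eqref{mthmfour} and the factorization $\nabla\brj=2\sqrt{\brj}\,\nabla\sqrt{\brj}$, then invoke Lions--Aubin --- and the time-derivative estimate is fine. The gap is in the first step.

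You assert that ``$\sup_t\|\tuj\|_{2,\Omega}$ [is] controlled in terms of $\|\nabla u_0\|_{p,\Omega}$ and $\|u_0\|_{2,\Omega}$'' as a consequence of Lemma~\ref{lemma4.1}. But that lemma only gives
\[
\frac{\tau}{2}\,\max_{0\le t\le T}\int_\Omega \buj^2\,dx \le c,
\]
i.e.\ $\|\buj\|_{2,\Omega}^2 \le c/\tau$, which blows up as $j\to\infty$. So you do not yet have a uniform bound on $\|\tuj\|_{L^p(\Omega)}$ (or any $L^q$), and without it the $W^{1,p}(\Omega)$ bound is incomplete: you control only the gradient. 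The paper closes this gap by a separate argument for the spatial mean: multiply \eqref{mthmfive} by $\tau$, add to \eqref{mthmfour}, and integrate over $\Omega$ to obtain
\[
\frac{d}{dt}\int_\Omega \tuj\,dx + \tau^2\int_\Omega \buj\,dx = 0,
\]
whence $\bigl|\int_\Omega \tuj\,dx\bigr|\le c$ uniformly in $j$ (using $\tau^2\|\buj\|_1 \le c\tau^{3/2}$ from the estimate above). Poincar\'e's inequality then upgrades the gradient bound to a full $W^{1,p}(\Omega)$ bound. Once you insert this step, the remainder of your argument goes through and coincides with the paper's.
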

\begin{proof}
	We begin with the sequence $\{\tuj\}$. For any $t\in\left(0,T\right]$, there exists a $k$ so that, $t\in\left(t_{k-1} - t_k\right]$. Then using lemma \ref{lemma4.1} we have that, 
	\begin{align}
		\int_{\Omega} \left|\nabla \tuj(x,t)\right|^p dx & = \int_{\Omega} \left| \frac{t-t_{k-1}}{\tau} \nabla u_k + \left(1-\frac{t-t_{k-1}}{\tau}\right) \nabla u_{k-1} \right|^p dx \nonumber \\
		& \leq \frac{t-t_{k-1}}{\tau} \int_{\Omega} \left|\nabla u_k \right|^p dx + \left(1-\frac{t-t_{k-1}}{\tau}\right) \int_{\Omega} \left|\nabla u_{k-1} \right|^p dx \nonumber \\
		& \leq c \sup_{0\leq t \leq T} \int_{\Omega} \left|\nabla \buj \right|^p dx \leq c. 
	\end{align}
	Hence, we can conclude that there is a positive constant $c$, independent of $\tau$, so that,
	\begin{equation}\label{suplemma4.3}
		\sup_{0\leq t \leq T} \int_{\Omega} \left|\nabla \tuj(x,t)\right|^p dx \leq c. 
	\end{equation}
	Next, we multiply through \eqref{mthmfive} by $\tau$ and add the resulting equation to \eqref{mthmfour} to obtain,
	\begin{equation}
		\partial_t \tuj -\Delta \brj -\tau \Delta_p \buj + \tau^2 \buj = 0. 
	\end{equation}
	We then integrate this equation over $\Omega$ to find,
	\begin{equation}
		 \frac{d}{dt} \int_{\Omega} \tuj dx + \tau^2 \int_{\Omega} \buj dx = 0. 
	\end{equation}
	Integrating this equation with respect to $t$ and using lemma \ref{lemma4.1} we then obtain,
	\begin{equation}\label{l1fortuj}
		\left|\int_{\Omega} \tuj dx \right| \leq c. 
	\end{equation}
	We then use Poincare's inequality and \eqref{l1fortuj} to find,
	\begin{align}
		\int_{\Omega} \left|\tuj\right|^p dx & \leq c \int_{\Omega} \left| \tuj - \frac{1}{\left|\Omega\right|} \int_{\Omega} \tuj dx \right|^p dx + c \left(\int_{\Omega} \tuj dx \right)^p \\
		& \leq c \int_{\Omega} \left|\nabla \tuj \right|^p dx + c. 
	\end{align}
	Thus, from \eqref{suplemma4.3} we have that the sequence $\{\tuj\}$ is a bounded sequence in $L^{\infty}\left(0,T;W^{1,p}\left(\Omega\right)\right)$. 
	
	Next, since the sequence $\sqrt{\brj}$ is bounded in $L^2\left(0,T;\wot\right)$, we can conclude that, $\left|\nabla \rho\right| \in L^1\left(\ot\right)$. Also, $\tau \ln(\rho) \in L^2\left(\ot\right) $. Then, since,
	\begin{equation}
		\partial_t \tuj = \Delta \brj - \tau\ln(\brj)
	\end{equation}
	We can conclude that $\partial_t \tuj \in L^1\left(0,T;\left(W^{1,\infty}\left(\Omega\right)\right)^{*}\right)$. We are now in a position to be able to apply the Lions-Aubin lemma \ref{la}. Subsequently, we can conclude that $\tuj$ is precompact in $L^{p}\left(0,T; L^p\left(\Omega\right)\right)=L^p(\ot)$.
\end{proof}

To show that $\{\bar{u}_j\}$ is also precompact in $L^p\left(\ot\right)$ it is enough to show that it has the same limit pointwise a.e. on $\ot$. This can be accomplished first by noticing,
\begin{align*}
	\int_{0}^{T} \left(\tuj-\buj\right) dt &= \sum_{k=1}^{j} \left[ \int_{t_{k-1}}^{t_k} \frac{t-t_{k-1}}{\tau} \left(u_k-u_{k-1}\right) - \left(u_k-u_{k-1}\right) \right] dt \\
	&= -\frac{1}{2} \tau \sum_{k=1}^{j} \left(u_k-u_{k-1}\right) \\
	& = -\frac{1}{2} \tau \left( u_j - u_0\right).
\end{align*}
Now upon integrating the above equation over $\Omega$ and using lemma \ref{lemma4.3} we find,
\begin{align*}
	\int_{\ot} \left(\tuj-\buj\right) dx &\leq c \tau \sup_{0\leq t \leq T} \int_{\Omega} \tuj dx \\
	& \leq c\tau. 
\end{align*}
Subsequently, $\tuj$ and $\buj$ have the same pointwise limit. In light of this, along with lemmas \ref{lemma4.1}, and \ref{lemma4.2}, we can conclude that at least for a subsequence,
\begin{align}
	\tuj \rightarrow u \ \ &\mbox{ Weakly in $L^p\left(0,T;W^{1,p}\left(\Omega\right)\right)$, strongly in $L^p\left(\ot\right)$ and a.e. on $\ot$, } \label{tujpointwise} \\
	\buj\rightarrow u \ \ & \mbox{ Weakly in $L^p\left(0,T;W^{1,p}\left(\Omega\right)\right)$, strongly in $L^p\left(\ot\right)$ and a.e. on $\ot$, } \\
	\sqrt{\brj} \rightarrow \rho \ \ & \mbox{ Weakly in $L^2\left(0,T;\wot\right)$. }
\end{align}

Now, for the $\ln(\brj)$ term, we are able to gain the following $L^1\left(\ot\right)$ bound.

\begin{lemma}\label{lemma4.4}
	There exists a constant $c$ which is independent of $\tau$ so that,
	\begin{equation}
		\int_{\ot} \left|\ln(\brj)\right| dxdt \leq c. 
	\end{equation}
\end{lemma}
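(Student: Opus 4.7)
The plan is to imitate, in the discrete setting, the formal argument given in the Introduction that used $\int_\Omega \ln(\rho)\,dx=0$. The only difference here is that integrating \eqref{mthmtwo} over $\Omega$ does not kill the right-hand side completely: the $\tau u_k$ term survives. So I expect to obtain
\begin{equation*}
\int_\Omega \ln(\rho_k)\,dx \;=\; \tau\int_\Omega u_k\,dx,
\end{equation*}
by using the Neumann boundary condition \eqref{mthmthree} to ensure that $\int_\Omega \Delta_p u_k\,dx = 0$.

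Next I would decompose $|\ln(\rho_k)|=2\ln^+(\rho_k)-\ln(\rho_k)$ and use the elementary estimate $\ln^+(x)\leq \sqrt{x}$, valid for all $x>0$ (from the fact that $\sqrt{x}-\ln x$ attains its minimum on $[1,\infty)$ at $x=4$, where the value is $2-\ln 4>0$). Plugging in the identity above, this yields
\begin{equation*}
\int_\Omega |\ln(\rho_k)|\,dx \;\leq\; 2\int_\Omega \sqrt{\rho_k}\,dx + \tau\int_\Omega |u_k|\,dx.
\end{equation*}
Multiplying by $\tau$, summing over $k=1,\dots,j$, and rewriting sums in terms of the piecewise-constant interpolants gives
\begin{equation*}
\int_{\ot}|\ln(\brj)|\,dx\,dt \;\leq\; 2\int_{\ot}\sqrt{\brj}\,dx\,dt + \tau\int_{\ot}|\buj|\,dx\,dt.
\end{equation*}

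Each of the two terms on the right is then controlled by earlier estimates. For the first, Lemma \ref{lemma4.2} bounds $\sqrt{\brj}$ in $L^2(\ot)$, so Hölder's inequality (combined with the finite measure of $\ot$) bounds it in $L^1(\ot)$. For the second, I need $\buj$ uniformly bounded in $L^1(\ot)$. The bound \eqref{l1fortuj} established in the proof of Lemma \ref{lemma4.3} gives $|\int_\Omega \tuj(x,t)\,dx|\leq c$, and since $\tuj(x,t_k)=u_k(x)$, this transfers to $|\int_\Omega u_k\,dx|\leq c$ for every $k$, hence $\int_\Omega \buj\,dx$ is bounded in $L^\infty(0,T)$. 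Together with Lemma \ref{lemma4.1}'s bound on $\|\nabla\buj\|_{p,\Omega}$ and Poincar\'e's inequality, this gives $\|\buj\|_{p,\Omega}\leq c$ uniformly in $t$ and $\tau$, so $\tau\int_{\ot}|\buj|\,dx\,dt\leq c\tau\leq c$.

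I do not anticipate a serious obstacle; the argument closely mirrors the formal a-priori estimate for $\int_{\ot}|\ln(\rho)|\,dx\,dt$ in the Introduction. The only delicate point is recognizing that although $\int_\Omega \ln(\rho_k)\,dx$ is no longer zero in the discrete scheme, the discrepancy is proportional to $\tau$ times a uniformly bounded mean, so it contributes at most a $\tau$-bounded error after summation and disappears in the limit $\tau\to 0$.
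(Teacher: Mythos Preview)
Your proposal is correct and follows essentially the same approach as the paper: integrate \eqref{mthmtwo} over $\Omega$ to get $\int_\Omega \ln(\rho_k)\,dx=\tau\int_\Omega u_k\,dx$, use the decomposition $|\ln|=2\ln^+-\ln$, bound $\ln^+$ by a power of $\rho$, and then invoke Lemma~\ref{lemma4.2}. The only cosmetic differences are that the paper uses $\ln^+(\rho)\leq\rho$ (so that $\int_{\ot}\brj\,dxdt$ appears directly) rather than $\ln^+(\rho)\leq\sqrt\rho$, and it bounds $\tau\int_\Omega|\buj|\,dx$ straight from the term $\tfrac{\tau}{2}\max_t\int_\Omega\buj^2\,dx$ in Lemma~\ref{lemma4.1} instead of going through \eqref{l1fortuj} and Poincar\'e.
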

\begin{proof}
	For the proof we first integrate equation \ref{mthmfive} over $\Omega$ to find that,
	\begin{equation}
		\int_{\Omega} \ln(\brj) dx = \tau \int_{\Omega} \buj dx. 
	\end{equation}
	Then by lemma \ref{lemma4.1} we obtain,
	\begin{equation}
		\max_{0 \leq t \leq T} \left|\int_{\Omega} \ln(\brj) dx \right| \leq \tau \max_{0 \leq t \leq T} \int_{\Omega} \left|\buj\right| dx \leq c. 
	\end{equation}
	Then, using this we have,
	\begin{equation}
		\begin{aligned}
			\int_{\Omega} \left|\ln(\brj)\right| dx &= \int_{\Omega} \ln^+(\brj) dx + \int_{\Omega} \ln^-(\brj) dx \\
			&= 2\int_{\Omega} \ln^+(\brj) dx - \int_{\Omega} \ln(\brj) dx \\
			& \leq 2\int_{\Omega} \brj dx + c.
		\end{aligned}
	\end{equation}
	Upon integrating this inequality with respect to $t$, and using lemma \ref{lemma4.2}, we obtain the claim. 
\end{proof}

We are now ready to prove further compactness for the sequence $\{\nabla\buj\}$. 
\begin{lemma}\label{lemma4.5}
	The sequence $\{\nabla\buj\}$ is precompact in $\left(L^q\left(\ot\right)\right)^N$ for $q<p$. 
\end{lemma}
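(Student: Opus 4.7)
The plan is to prove the lemma in two stages. First, I will establish that, along a subsequence, $\nabla\buj\to\nabla u$ pointwise a.e. on $\ot$. Then I will upgrade this to strong convergence in $L^q(\ot)^N$ for every $q<p$ via Vitali's convergence theorem. Vitali applies because the uniform $L^p(\ot)$ bound on $\nabla\buj$ from Lemma \ref{lemma4.1} yields a uniform $L^{p/q}$ bound on $|\nabla\buj|^q$ with $p/q>1$, which supplies the equiintegrability that Vitali requires. The restriction $q<p$ reflects the fact that we cannot preclude concentration of $|\nabla\buj|^p$ onto a set of small measure, so $L^p$-precompactness is not claimed.

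For the a.e. convergence of the gradients I will use a Minty-type monotonicity argument built on Lemma \ref{odenone}. Repeating the computation that led to \eqref{redundant} pointwise in $t$ and integrating via H\"older, one obtains
\begin{equation*}
\int_{\ot}\left|\nabla(\buj-u)\right|^p dxdt \leq c\left(\int_{\ot}\left(\left|\nabla \buj\right|^{p-2}\nabla \buj - \left|\nabla u\right|^{p-2}\nabla u\right)\cdot \nabla(\buj-u)\, dxdt\right)^{p/2},
\end{equation*}
with the suppressed factor uniformly bounded by virtue of the $L^\infty(0,T;W^{1,p}(\Omega))$-bound from Lemma \ref{lemma4.1}. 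Since $\ln(\brj)$ is only bounded in $L^1(\ot)$ by Lemma \ref{lemma4.4}, I cannot insert $\buj-u$ directly into \eqref{mthmfive}. Instead I will use the truncation $T_M(\buj-u)$, where $T_M(s)=\max(\min(s,M),-M)$, as a test function. This lies in $L^\infty(\ot)\cap L^p(0,T;W^{1,p}(\Omega))$, and gives
\begin{equation*}
\int_{\ot}\left|\nabla \buj\right|^{p-2}\nabla \buj \cdot \nabla T_M(\buj-u)\, dxdt + \tau\int_{\ot}\buj\, T_M(\buj-u)\, dxdt = \int_{\ot}\ln(\brj)\, T_M(\buj-u)\, dxdt.
\end{equation*}
Subtracting $\int\left|\nabla u\right|^{p-2}\nabla u\cdot\nabla T_M(\buj-u)\, dxdt$, which vanishes as $j\to\infty$ by the weak convergence of $\nabla\buj$ to $\nabla u$ in $L^p$ paired against $\left|\nabla u\right|^{p-2}\nabla u\in L^{p'}(\ot)^N$, reduces the left-hand side to the monotonicity integral over $\{|\buj-u|<M\}$, modulo a vanishing error from the $\tau$-term (since $\tau=T/j\to 0$ and $\buj$ is uniformly bounded in $L^\infty(0,T;L^2(\Omega))$ by Lemma \ref{lemma4.1}).

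The main obstacle is showing that the right-hand side $\int_{\ot}\ln(\brj)\, T_M(\buj-u)\, dxdt$ tends to zero as $j\to\infty$ for each fixed $M$: a direct bound only yields $O(M)$. I expect to handle this by first extracting a subsequence along which $\brj\to\rho$, and hence $\ln(\brj)\to\ln(\rho)$, a.e. on $\ot$. The extraction should come from a Lions-Aubin-style argument applied to $\sqrt{\brj}$, using the spatial regularity $\sqrt{\brj}\in L^2(0,T;\wot)$ together with a weak time-regularity extracted from \eqref{mthmfour}. Once this a.e. convergence is in hand, combining it with $T_M(\buj-u)\to 0$ a.e. and the uniform $L^1$-bound on $\ln(\brj)$ through a Vitali/equiintegrability argument forces the pairing to vanish. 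The monotonicity inequality then yields $\nabla\buj\to\nabla u$ in $L^p$ on $\{|\buj-u|<M\}$, and sending $M\to\infty$ while noting that $|\{|\buj-u|\geq M\}|\to 0$ as $M\to\infty$ uniformly in $j$ (a consequence of the $L^\infty(0,T;L^2(\Omega))$-bound on $\buj$ via Chebyshev) delivers a.e. convergence on all of $\ot$, closing the argument.
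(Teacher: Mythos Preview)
Your proposal has a genuine gap at the crucial step of showing $\int_{\ot}\ln(\brj)\,T_M(\buj-u)\,dxdt\to 0$, and it breaks in two places. First, the Lions--Aubin argument you sketch for $\sqrt{\brj}$ cannot be executed: equation \eqref{mthmfour} controls $\partial_t\tuj$, not $\partial_t\brj$ or $\partial_t\sqrt{\brj}$, and \eqref{mthmfive} carries no time-derivative information either. There are simply no time-direction estimates on $\brj$---this is exactly the difficulty flagged in the abstract and introduction. In the paper the a.e.\ convergence of $\brj$ (Lemma \ref{lemma4.6}) is proved \emph{after} the present lemma and uses its consequence (the strong convergence of $|\nabla\buj|^{p-2}\nabla\buj$), so invoking it here is circular. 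Second, even granting a.e.\ convergence of $\ln(\brj)$, your Vitali step fails: a uniform $L^1$ bound is not equiintegrability. In fact $\ln(\brj)=-\Delta_p\buj+\tau\buj$ converges only weak-$*$ as a measure with a possibly nontrivial singular part (cf.\ (D4)), so $\{\ln(\brj)\}$ is \emph{not} equiintegrable in general, and you cannot pass to the limit in the pairing from pointwise information alone. (A minor side issue: Lemma \ref{lemma4.1} gives only $\tau\|\buj\|_{L^\infty_tL^2_x}^2\le c$, not a uniform $L^\infty_tL^2_x$ bound as you state.)

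The paper's proof sidesteps both obstacles by a different mechanism. It works with the difference $\bar u_{j_1}-\bar u_{j_2}$, applies Egoroff to the already-known a.e.\ convergence of $\buj$ (not $\brj$) to get uniform convergence on a set $E$ with $|\ot\setminus E|\le\delta$, and then tests \eqref{mthmfive} with a truncation $\gamma_\varepsilon(\bar u_{j_1}-\bar u_{j_2})$ at a \emph{small} level $\varepsilon$. On $E$ the truncation is inactive for large $j_1,j_2$, so the monotonicity term survives; on the right the bound $|\gamma_\varepsilon|\le\varepsilon$ paired with only the $L^1$ estimate of Lemma \ref{lemma4.4} already gives $O(\varepsilon)$. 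This yields Cauchyness of $\nabla\buj$ on $E$, and the uniform $L^p$ bound on $\ot\setminus E$ with $|\ot\setminus E|\le\delta$ upgrades this to $L^q$-precompactness for $q<p$. The essential contrast with your argument is that truncating at a small level turns the merely-$L^1$ right-hand side into something small outright, whereas truncation at a large level $M$ still requires the right-hand side to converge, which it need not.
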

\begin{proof}
	The idea for this proof comes from \cite{XX}. From \ref{tujpointwise}, we can assume that $\buj \rightarrow u$ pointwise a.e. on $\ot$ (at least for a subsequence). Then, by Egoroff's Theorem, for each $\delta > 0$ there is $E \subset \ot$ so that,
	\begin{equation}
		\left|\ot\backslash E \right| \leq \delta, \ \ \mbox{ and $\buj \rightarrow u$ uniformly on $E$. }
	\end{equation}
	The uniform convergence allows us to find a positive number $c$ so that,
	\begin{equation}\label{bound}
		\left|\buj\right| \leq c \ \ \mbox{ on $E$. } 
	\end{equation}
	Then, for any $\varepsilon > 0$, we have that,
	\begin{equation}
		\left|\bar{u}_{j_1} - \bar{u}_{j_2} \right| < \varepsilon \ \ \mbox{ on $E$, for sufficiently large $j_1, j_2$. }
	\end{equation}
	We now define the function 
	\begin{equation}
		\gamma_{\varepsilon}(s) = \left\{ \begin{array}{ll} \varepsilon, & s > \varepsilon \\ s, & \left|s\right| \leq \varepsilon \\ -\varepsilon, & s < -\varepsilon \end{array} \right.
	\end{equation}
	Now, from equation \ref{mthmfour}, we derive, 
	\begin{equation}
		-\Delta_p \bar{u}_{j_1} + \Delta_p \bar{u}_{j_2} + \tau_1\bar{u}_{j_1} - \tau_2\bar{u}_{j_2}  = \ln(\bar{\rho}_{j_1}) - \ln(\bar{\rho}_{j_2}). 
	\end{equation}
	We then use $\gamma_{\varepsilon}\left(\bar{u}_{j_1} - \bar{u}_{j_2} \right)$ as a test function in the above equation to find,
	\begin{equation}
		\begin{aligned}
			\int_{\Omega} & \left( \left|\nabla\bar{u}_{j_1} \right|^{p-2} \nabla \bar{u}_{j_1} - \left|\nabla\bar{u}_{j_2} \right|^{p-2} \nabla \bar{u}_{j_2} \right) \cdot \nabla \gamma_{\varepsilon}\left(\bar{u}_{j_1} - \bar{u}_{j_2} \right) dx \\
			& \leq \int_{\Omega} \left( \ln(\bar{\rho}_{j_1}) - \ln(\bar{\rho}_{j_2}) \right) \gamma_{\varepsilon}\left(\bar{u}_{j_1} - \bar{u}_{j_2} \right) dx + \int_{\Omega} \left(\tau_2 \bar{u}_{j_2} - \tau_1 \bar{u}_{j_1} \right) \gamma_{\varepsilon}\left(\bar{u}_{j_1} - \bar{u}_{j_2} \right) dx
		\end{aligned}
	\end{equation}
	Now, upon integrating with respect to $t$ and using lemma \ref{odenone} and \eqref{bound} we derive that,
	\begin{equation}
		\begin{aligned}
			\int_{E} \left|\nabla\left(\bar{u}_{j_1} - \bar{u}_{j_2} \right) \right|^2 dxdt &\leq c \int_{\ot} \left(\ln(\bar{\rho}_{j_1}) - \ln(\bar{\rho}_{j_2}) \right) \gamma_{\varepsilon}\left(\bar{u}_{j_1} - \bar{u}_{j_2} \right) dxdt \\
			&+ \int_{\ot}\left(\tau_2 \bar{u}_{j_2} - \tau_1 \bar{u}_{j_1} \right) \gamma_{\varepsilon}\left(\bar{u}_{j_1} - \bar{u}_{j_2} \right)dxdt  \\
			& \leq c \varepsilon. 
		\end{aligned}
	\end{equation} 
	Subsequently, we may conclude that the sequence $\{\nabla \buj\}$ is precompact in $\left(L^2\left(\ot\right) \right)^{N}$. Then, let $q<p$ be given. Then we may estimate,
	\begin{align}
		\int_{\ot} \left| \nabla \left( \bar{u}_{j_1} - \bar{u}_{j_2} \right) \right|^q dxdt & = \int_{E} \left| \nabla \left( \bar{u}_{j_1} - \bar{u}_{j_2} \right) \right|^q dxdt + \int_{\ot\backslash E} \left| \nabla \left( \bar{u}_{j_1} - \bar{u}_{j_2} \right) \right|^q dxdt \nonumber \\
		& \leq c \left| \ot \backslash E \right|^{1-\frac{q}{p}} + \int_{\ot\backslash E} \left| \nabla \left( \bar{u}_{j_1} - \bar{u}_{j_2} \right) \right|^q dxdt \nonumber \\
		& \leq c \delta^{1-\frac{q}{p}} + \int_{\ot\backslash E} \left| \nabla \left( \bar{u}_{j_1} - \bar{u}_{j_2} \right) \right|^q dxdt
	\end{align}
	Therefore, we can conclude that,
	\begin{equation}
		\limsup_{j\rightarrow\infty} \int_{\ot} \left| \nabla \left( \bar{u}_{j_1} - \bar{u}_{j_2} \right) \right|^q dxdt \leq  c \delta^{1-\frac{q}{p}}.
	\end{equation}
	Since $\delta$ was arbitrary, the claim then follows. 
\end{proof}

Now, by lemma \ref{lemma4.1} we can conclude that $\left|\nabla \buj \right|^{p-2} \nabla \buj$ is bounded in $L^{\frac{p}{p-1}}\left(\ot\right)$, and (at least for a subseqence), $$\left|\nabla \buj \right|^{p-2} \nabla \buj \rightarrow \left|\nabla u \right|^{p-2}\nabla u \ \ \mbox{ pointwise a.e. on $\ot$. } $$ It then follows from the Lebesgue Dominated Convergence Theorem that, 
\begin{equation}
	\left|\nabla \buj \right|^{p-2} \nabla \buj \rightarrow \left|\nabla u \right|^{p-2}\nabla u \ \ \mbox{ Strongly in $L^{\frac{p}{p-1}}\left(\ot\right)$. }
\end{equation}
Now we need to deal with the nonlinear term $\ln(\brj)$. To handle this term we will first need to demonstrate the pointwise convergence of the sequence $\{\brj\}$. This is accomplished in our next lemma.
\begin{lemma}\label{lemma4.6}
	The sequence $\{\sqrt{\brj}\}$ is precompact in $L^2\left(\ot\right)$. 
\end{lemma}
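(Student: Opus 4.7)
The strategy hinges on the third inequality of Lemma \ref{inequalities}, $(\sqrt{a}-\sqrt{b})(\ln a - \ln b) \geq 4(a^{1/4}-b^{1/4})^2$, which identifies $\sqrt{\bar{\rho}_j}$ as the natural test pair for $\ln(\bar{\rho}_j)$ appearing on the right of \eqref{mthmfive}. My plan is to form the difference of equation \eqref{mthmfive} at two indices $j_1,j_2$,
\[
-\Delta_p \bar{u}_{j_1} + \Delta_p \bar{u}_{j_2} + \tau_1 \bar{u}_{j_1} - \tau_2 \bar{u}_{j_2} = \ln\bar{\rho}_{j_1} - \ln\bar{\rho}_{j_2},
\]
and to test this against $\sqrt{\bar{\rho}_{j_1}} - \sqrt{\bar{\rho}_{j_2}}$, which is admissible because Lemmas \ref{lemma4.1}--\ref{lemma4.2} give $\sqrt{\bar{\rho}_j} \in L^2(0,T;W^{1,2}(\Omega))$. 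Integrating over $\ot$, integrating by parts on the $p$-Laplacian term, and applying the pointwise inequality to the left-hand side yields
\begin{align*}
4 \int_{\ot} (\bar{\rho}_{j_1}^{1/4} - \bar{\rho}_{j_2}^{1/4})^2 \, dxdt
&\leq \int_{\ot}\left(|\nabla\bar{u}_{j_1}|^{p-2}\nabla\bar{u}_{j_1} - |\nabla\bar{u}_{j_2}|^{p-2}\nabla\bar{u}_{j_2}\right)\cdot\nabla\!\left(\sqrt{\bar{\rho}_{j_1}} - \sqrt{\bar{\rho}_{j_2}}\right) dxdt \\
&\quad + \int_{\ot} (\tau_1 \bar{u}_{j_1} - \tau_2 \bar{u}_{j_2})\!\left(\sqrt{\bar{\rho}_{j_1}} - \sqrt{\bar{\rho}_{j_2}}\right) dxdt.
\end{align*}
The last integral is of order $O(\tau)$ and vanishes as $j_1,j_2\to\infty$, since $\bar{u}_j$ and $\sqrt{\bar{\rho}_j}$ are uniformly bounded in $L^2(\ot)$.

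The main obstacle is driving the $p$-Laplacian contribution to zero. Cauchy--Schwarz reduces this to proving strong convergence of $|\nabla\bar{u}_j|^{p-2}\nabla\bar{u}_j$ in $(L^2(\ot))^N$, since $\|\nabla\sqrt{\bar{\rho}_j}\|_{L^2(\ot)}$ is uniformly bounded by Lemma \ref{lemma4.1}. This is precisely where the assumption $1<p\leq 2$ becomes decisive: Lemma \ref{lemma4.1} provides a uniform $L^\infty(0,T;L^p(\Omega))$-bound on $\nabla\bar{u}_j$, so $|\nabla\bar{u}_j|^{p-2}\nabla\bar{u}_j$ is bounded in $L^{p/(p-1)}(\ot)$ with $p/(p-1)\geq 2$; coupled with the pointwise a.e.\ convergence supplied by Lemma \ref{lemma4.5}, Vitali's convergence theorem upgrades this to strong convergence in $L^2(\ot)$. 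For $p>2$ the corresponding exponent $p/(p-1)$ drops below $2$ and this step fails, in agreement with the hypothesis of the theorem.

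Once both right-hand side integrals are controlled, the estimate forces $\{\bar{\rho}_j^{1/4}\}$ to be Cauchy, hence strongly convergent, in $L^2(\ot)$. Extracting a subsequence converging pointwise a.e.\ on $\ot$ gives pointwise a.e.\ convergence of $\{\sqrt{\bar{\rho}_j}\} = \{(\bar{\rho}_j^{1/4})^2\}$ as well. To close, I would upgrade this pointwise convergence to strong $L^2(\ot)$-convergence via Vitali one more time: Lemmas \ref{lemma4.1}--\ref{lemma4.2} supply a uniform bound on $\sqrt{\bar{\rho}_j}$ in $L^2(0,T;W^{1,2}(\Omega))$ and therefore, through Sobolev embedding, in $L^2(0,T;L^{2N/(N-2)}(\Omega))$; this higher spatial integrability, combined with the pointwise a.e.\ convergence, yields the $L^2$-equi-integrability needed to conclude.
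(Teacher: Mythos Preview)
Your proof is essentially the paper's own argument: subtract \eqref{mthmfive} at two indices, test with $\sqrt{\bar{\rho}_{j_1}}-\sqrt{\bar{\rho}_{j_2}}$, apply the third inequality of Lemma~\ref{inequalities} on the left, and let the right-hand side vanish as a strong--weak pairing (the strong $L^{p/(p-1)}$-convergence of $|\nabla\bar{u}_j|^{p-2}\nabla\bar{u}_j$ that you derive via Vitali is exactly what the paper records just before the statement of Lemma~\ref{lemma4.6}), then pass from $\bar{\rho}_j^{1/4}$ to $\sqrt{\bar{\rho}_j}$. The only differences are cosmetic: you invoke Vitali where the paper invokes Dominated Convergence, and the uniform $L^2(\ot)$-bound you claim for $\bar{u}_j$ is actually a bound on $\sqrt{\tau}\,\bar{u}_j$ (Lemma~\ref{lemma4.1}), which still yields $\tau\bar{u}_j\to 0$ in $L^2(\ot)$ as needed.
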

\begin{proof}
	For the proof we first derive from equation \eqref{mthmfive} that,
	\begin{equation}
		\ln(\bar{\rho}_{j_1}) - \ln(\bar{\rho}_{j_2}) = \tau_1 \bar{u}_{j_1} - \tau_2\bar{u}_{j_2}  - \Delta_p \bar{u}_{j_1} + \Delta_p \bar{u}_{j_2}. 
	\end{equation}
	We now use $\left(\sqrt{\bar{\rho}_{j_1}} - \sqrt{\bar{\rho}_{j_2} } \right)$ as a test function in the above equation to find,
	\begin{equation}\label{meh}
		\begin{aligned}
		\lefteqn{	\int_{\ot} \left(\ln(\bar{\rho}_{j_1}) - \ln(\bar{\rho}_{j_2})\right) \left(\sqrt{\bar{\rho}_{j_1}} - \sqrt{\bar{\rho}_{j_2} } \right) dxdt}\\
		 & =  \int_{\ot} \left(\tau_1 \bar{u}_{j_1} - \tau_2\bar{u}_{j_2} \right) \left(\sqrt{\bar{\rho}_{j_1}} - \sqrt{\bar{\rho}_{j_2} } \right) dxdt \\
			& + \int_{\ot} \left( \left|\nabla \bar{u}_{j_1} \right|^{p-2} \nabla \bar{u}_{j_1} - \left|\nabla \bar{u}_{j_2} \right|^{p-2}\nabla \bar{u}_{j_2} \right) \cdot \nabla \left(\sqrt{\bar{\rho}_{j_1}} - \sqrt{\bar{\rho}_{j_2}} \right) dxdt .
		\end{aligned}
	\end{equation}
	Then, the two integrals on the right converge to zero, as they are products of weakly and strongly convergent sequences. For the left hand side however, we use the third statement in lemma \ref{inequalities} to get,
	\begin{equation}
		\int_{\ot} \left(\ln(\bar{\rho}_{j_1}) - \ln(\bar{\rho}_{j_2})\right) \left(\sqrt{\bar{\rho}_{j_1}} - \sqrt{\bar{\rho}_{j_2} } \right) dxdt \geq 4 \int_{\ot} \left(\bar{\rho}_{j_1}^{\frac{1}{4}} - \bar{\rho}_{j_2}^{\frac{1}{4}} \right)^2 dxdt.
	\end{equation}
	This, together with \eqref{meh}, implies that the sequence $\{\brj^{\frac{1}{4}}\}$ is precompact in $L^2\left(\ot\right)$. Therefore, at least a subsequence of $\{\brj^{\frac{1}{4}}\}$ converges pointwise a.e. on $\ot$. Then $\sqrt{\brj} = \left(\brj^{\frac{1}{4}} \right)^2$ must converge pointwise a.e. on $\ot$. By the Lebesgue Dominated Convergence Theorem, we can then conclude that at least a subsequence of $\{\sqrt{\brj}\}$ converges strongly in $L^2\left(\ot\right)$. 
\end{proof}

We are now ready to prove Theorem \ref{mainthm}. 
\begin{proof}
	
By passing to subsequences, we may assume that,
\begin{align}
	\tuj \rightarrow u \ \ &\mbox{ strongly in $L^p\left(\ot\right)$, }  \\
	\buj \rightarrow u \ \ &\mbox{ strongly in $L^p\left(0,T;W^{1,p}\left(\Omega\right)\right)$, and a.e. on $\ot$, }   \\
	\sqrt{\brj} \rightarrow \sqrt{\rho} \ \ &\mbox{ strongly in $L^2\left(\Omega\right)$, weakly in $L^2\left(0,T;\wot\right)$, and a.e. on $\ot$. } \label{rootrhoconverges}
\end{align}

Now, from lemma \ref{lemma4.6} we can also conclude that,
\begin{equation}
	\nabla \brj \rightarrow \nabla \rho \ \ \mbox{ weakly in $L^1(\ot)$, }. 
\end{equation}
To see this, we note that,
\begin{equation}
	\nabla \brj = 2\sqrt{\brj} \nabla \sqrt{\brj}. 
\end{equation}
Then, $\{\sqrt{\brj}\}$ converges strongly in $L^2(\ot)$ while $\{\nabla \sqrt{\brj}\}$ converges weakly in $\left(L^2\left(\ot\right)\right)^N$. Therefore their product converges weakly in $L^1(\Omega)$. 

Next, by Fatou's Lemma, and lemma \ref{lemma4.4} we have that,
\begin{equation}
	\int_{\ot} \left|\ln(\rho) \right| dxdt \leq \liminf_{j\rightarrow \infty} \int_{\ot} \left|\ln(\brj)\right| dxdt \leq c. 
\end{equation}
Then, the set,
\begin{equation}
	A_0 = \{ (x,t) \in \ot: \rho(x,t) = 0\},
\end{equation}
must have Lebesgue measure $0$. This, along with \eqref{rootrhoconverges}, then implies that,
\begin{equation}
	\ln(\brj) \rightarrow \ln(\rho) \ \ \mbox{ a.e. on $\ot$. } 
\end{equation}
Then, clearly
\begin{equation}
	\tau \buj \rightarrow 0 \ \ \mbox{ strongly in $L^p(\ot)$ and a.e. on $\ot$. }
\end{equation}
Then, from equation \eqref{mthmfive} we can conclude that, 
\begin{equation}
	- \Delta_p \buj \rightarrow \ln(\rho) \ \ \mbox{ a.e. on $\ot$ }. 
\end{equation}
On account of lemmas \ref{lemma4.4} and \ref{lemma4.1} the term $\Delta_p \buj$ is bounded in $L^1\left(\ot\right)$ and in $L^p\left(0,T;\left(W^{1,p}\left(\Omega\right)\right)^*\right)$. Subsequently, we have,
\begin{equation}
	-\Delta_p \buj \rightarrow -\Delta_p u \equiv \mu \ \ \mbox{ weakly in both $\mathbb{M}(\ot)$ and $L^p\left(0,T;\left(W^{1,p}\left(\Omega\right)\right)^*\right)$. }
\end{equation}
The question is then, do we have,
\begin{equation}
	-\Delta_p u = \mu = \ln(\rho). 
\end{equation}
Our next lemma addresses this question. 
\begin{lemma}\label{lemma4.7}
	The restriction of $\mu$ to the set $\overline{\ot}\backslash A_0$ is the function $\ln(\rho)$. In other words the Lebesgue Decomposition of $\mu$ is,
	\begin{equation}
		\mu = \ln(\rho) + \nu_s, 
	\end{equation}
	where $\nu_s$ is a measure supported in $A_0$, and we have,
	\begin{equation}
		\rho = e^{\mu} \ \ \mbox{ on the set $\overline{\ot}\backslash A_0$. } 
	\end{equation}

\end{lemma}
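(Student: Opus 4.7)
The approach is to pass to the limit in $-\Delta_p\buj+\tau_j\buj=\ln(\brj)$ by separately analyzing $\ln^\pm(\brj)$. For the positive part, the elementary bound $\ln^+(s)\le s$ combined with the strong $L^1(\ot)$ convergence $\brj=(\sqrt{\brj})^2\to\rho$ (from Lemma~\ref{lemma4.6}) renders $\{\ln^+(\brj)\}$ uniformly integrable; together with the pointwise a.e.\ convergence, Vitali's convergence theorem yields $\ln^+(\brj)\to\ln^+(\rho)$ strongly in $L^1(\ot)$.

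For the negative part, $\{\ln^-(\brj)\}\ge 0$ is bounded in $L^1(\ot)$ by Lemma~\ref{lemma4.4} and the previous step, so along a subsequence $\ln^-(\brj)\,dx\,dt\rightharpoonup\lambda$ weakly-$*$ to a finite nonnegative Radon measure on $\overline{\ot}$. Since $\ln^-(\brj)\to\ln^-(\rho)$ a.e., Fatou's lemma tested against nonnegative $\phi\in C_c(\ot)$ gives $\lambda\ge\ln^-(\rho)\,dx\,dt$ as measures, so $\sigma:=\lambda-\ln^-(\rho)\,dx\,dt\ge 0$. Using $\tau_j\buj\to 0$ in $L^p(\ot)$ and passing to the limit in the equation gives the identity of Radon measures $\mu=\ln(\rho)\,dx\,dt-\sigma$ on $\overline{\ot}$, reducing the lemma to showing that $\sigma$ is supported in $A_0$ (which then forces singularity since $|A_0|=0$).

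The identification of $\sigma$ is the main obstacle. My plan is an Egoroff localization. For any $\eta,\delta>0$, choose a compact $F_{\eta,\delta}\subset\{\rho\ge\eta\}$ with $|\{\rho\ge\eta\}\setminus F_{\eta,\delta}|<\delta$ on which $\brj\to\rho$ uniformly and $\lambda(\partial F_{\eta,\delta})=0$ (possible since $\lambda$ is finite, so a small perturbation avoids the at most countable family of bad boundary levels). Then eventually $\brj\ge\eta/2$ on $F_{\eta,\delta}$, so $\ln^-(\brj)\le|\ln(\eta/2)|$ is uniformly bounded and $\ln^-(\brj)\to\ln^-(\rho)$ in $L^1(F_{\eta,\delta})$; the weak-$*$ convergence then upgrades to $\lambda(F_{\eta,\delta})=\int_{F_{\eta,\delta}}\ln^-(\rho)\,dx\,dt$, forcing $\sigma(F_{\eta,\delta})=0$. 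Taking a countable exhaustion $\bigcup_n F_{\eta_n,\delta_n}$ of $\{\rho>0\}$ modulo a Lebesgue-null set, and observing that $\sigma$ cannot concentrate on any Lebesgue-null subset of $\{\rho>0\}$---for such concentration would require $\brj\to 0$ on that set, contradicting $\brj\to\rho>0$ a.e.---yields $\sigma(\{\rho>0\})=0$. Thus $\sigma$ is supported in $A_0$, establishing the Lebesgue decomposition $\mu=\ln(\rho)\,dx\,dt+\nu_s$ with $\nu_s=-\sigma$, and $\rho=e^{g_a}$ on $\ot\setminus A_0$ follows immediately from $g_a=\ln(\rho)$.
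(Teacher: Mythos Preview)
Your decomposition $\ln(\brj)=\ln^+(\brj)-\ln^-(\brj)$ and the treatment of $\ln^+$ via Vitali are correct and pleasant. The gap is in the last step of the $\ln^-$ analysis. You show $\sigma(F_{\eta,\delta})=0$ on each Egoroff set and then assert that $\sigma$ cannot charge the Lebesgue-null residual $\{\rho>0\}\setminus\bigcup_n F_{\eta_n,\delta_n}$ because ``such concentration would require $\brj\to 0$ on that set.'' This inference is false: a defect measure arising from weak-$*$ convergence of $L^1$ functions can concentrate on a Lebesgue-null set even though the functions converge a.e.\ to a nonzero limit there. Concretely, on $[0,1]$ take $\brj=e^{-j}$ on an interval of length $1/j$ shrinking to a point $x_0$ and $\brj=1$ elsewhere; then $\brj\to 1$ a.e., $\ln^-(\rho)\equiv 0$, yet $\ln^-(\brj)\,dx\rightharpoonup^*\delta_{x_0}$, so $\sigma=\delta_{x_0}$ sits on a null subset of $\{\rho=1>0\}$. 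Your Egoroff exhaustion covers $\{\rho>0\}$ only up to a Lebesgue-null set, and nothing in your argument excludes $\sigma$ from living precisely on that leftover.

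The paper avoids this obstacle by testing \eqref{mthmfive} with $\theta_\varepsilon(\sqrt{\brj})\xi$, a cutoff depending on the \emph{approximating} sequence rather than on the limit $\rho$. On the support of $\theta_\varepsilon(\sqrt{\brj})$ one has $\brj\ge\varepsilon^2$, so $\ln^-(\brj)$ is pointwise bounded and $\ln^+(\brj)\le\sqrt{\brj}\in L^2$; thus $\theta_\varepsilon(\sqrt{\brj})\ln(\brj)$ is bounded in $L^2(\ot)$ uniformly in $j$, and one can pass to the limit without ever invoking Egoroff or confronting a residual null set. After $j\to\infty$ the identity $\int\theta_\varepsilon(\sqrt{\rho})\xi\,d\mu=\int\theta_\varepsilon(\sqrt{\rho})\ln(\rho)\xi\,dxdt$ holds, and sending $\varepsilon\to 0$ via dominated convergence (the dominations being $|\xi|$ for $|\mu|$ and $|\ln(\rho)||\xi|\in L^1$ for Lebesgue measure) yields $\mu\!\restriction_{\ot\setminus A_0}=\ln(\rho)$. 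The moral: to rule out singular defect on $\{\rho>0\}$ you must truncate at the level of $\brj$, not $\rho$.
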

\begin{proof}
	For each $\varepsilon > 0$ we let $\theta_\varepsilon$ be a smooth function on $\mathbb{R}$ so that $\theta_\varepsilon(s) = 1$ if $s \geq 2\varepsilon$, and $\theta_\varepsilon(s) = 0$ if $s \leq \varepsilon$, and,
	\begin{equation}
		0 \leq \theta_\varepsilon \leq 1 \ \ \mbox{ on $\mathbb{R}$. } 
	\end{equation}
	Then, we have, 
	\begin{equation}
		\theta_\varepsilon(\sqrt{\brj}) \rightarrow \theta_\varepsilon(\sqrt{\rho})\ \ \mbox{ strongly in $L^q\left(\ot\right)$, for each $q\geq1$.  } 
	\end{equation}
	Now, pick a function $\xi \in C^{\infty}\left(\overline{\ot}\right)$. Multiply through equation \eqref{mthmfive} by $\theta_\varepsilon(\sqrt{\brj}) \xi$ to obtain,
	\begin{equation}\label{grrr}
		- \int_{\ot} \Delta_p \buj \theta_\varepsilon(\sqrt{\brj})\xi dxdt + \tau \int_{\ot} \buj \theta_\varepsilon(\sqrt{\brj}) \xi dxdt = \int_{\ot}\ln(\brj)\theta_\varepsilon(\sqrt{\brj})\xi dxdt.
	\end{equation}
	Then, for each $\varepsilon$ the sequence $\{\theta_\varepsilon(\sqrt{\brj})\ln(\brj)\}$ is bounded in $L^2(\ot)$. This along with \eqref{rootrhoconverges} implies that,
	\begin{equation}
		\int_{\ot}\ln(\brj)\theta_\varepsilon(\brj)\xi dxdt \rightarrow \int_{\ot} \ln(\rho)\theta_\varepsilon(\rho) \xi dxdt. 
	\end{equation}
	On the other hand,
	\begin{equation}
		-\int_{\ot} \Delta_p \buj \theta_\varepsilon(\brj) \xi dxdt = \int_{0}^{T} \langle -\Delta_p \buj , \theta_\varepsilon(\sqrt{\brj})\xi \rangle dt \rightarrow \int_{\ot} \theta_\varepsilon(\sqrt{\rho}) \xi d\mu. 
	\end{equation}
	Then, we can pass to the limit as $j\rightarrow\infty$ in \eqref{grrr} to find,
	\begin{equation}
		\int_{\ot} \theta_\varepsilon(\sqrt{\rho}) \xi d\mu = \int_{\ot} \theta_\varepsilon(\sqrt{\rho}) \ln(\rho) \xi dxdt. 
	\end{equation}
	Now, using the Dominated Convergence Theorem, we let $\varepsilon \rightarrow 0$ to find,
	\begin{equation}
		\int_{\ot\backslash A_0} \xi d\mu = \int_{\ot\backslash A_0 } \ln(\rho) \xi dxdt.  
	\end{equation}
	Since this is true for all $\xi \in C^{\infty}\left(\ot\right)$, we can conclude that,
	\begin{equation}
		\mu = \ln(\rho) \ \ \mbox{ on $\ot \backslash A_0$. }
	\end{equation}
	The proof is complete. 
\end{proof}
Now that we have lemma \ref{lemma4.7} we are ready to pass to the limit in equations \eqref{mthmfour}-\eqref{mthmfive}. The proof of Theorem \ref{mainthm} is now complete. 
\end{proof}

\end{document}